\theoremstyle{plain}
\newtheorem{theor}{Theorem}
\theoremstyle{plain}
\newtheorem{prop}{Proposition}
\theoremstyle{plain}
\newtheorem{lemma}{Lemma}
\theoremstyle{plain}
\newtheorem{cor}{Corollary}
\theoremstyle{remark}
\theoremstyle{remark}
\begin{document}

\title[The structure of subspaces in Orlicz spaces between $L^1$ and $L^2$]{The structure of subspaces in Orlicz spaces between $L^1$ and $L^2$}

\author{S.~V.~Astashkin}

\thanks{$^\dagger$\,The work was completed as a part of the implementation of the development program of the Scientific and Educational Mathematical Center Volga Federal District, agreement no. 075-02-2022-878.}

\maketitle

\begin{abstract} A subspace $H$ of a rearrangement invariant space $X$ on $[0,1]$ is strongly embedded in $X$ if, in $H$, convergence in $X$-norm is equivalent to convergence in measure. We obtain necessary and sufficient conditions on an Orlicz function $M$, under which the unit ball of an arbitrary strongly embedded subspace in the Orlicz space $L_M$ has equi-absolutely continuous norms in $L_M$. 
 
\end{abstract}

Primary classification: 46E30

Secondary classification(s): 46B03, 46B09, 46A45

\keywords{Rearrangement invariant space, Orlicz function, Orlicz space, Matuszewska-Orlicz indices, strongly embedded subspace, disjoint functions, independent functions, $p$-convex function, $q$-concave function}


\def\hj{{\mathbb R}}

\def\fg{{\mathbb N}}

\vskip 0.4cm

\section{Introduction}
\label{Intro}

The classical Khintchine inequality (see e.g. \cite[Chapter V, Theorem~8.4]{Z1})
asserts that, for every $0<p<\infty$, there exist
constants $A_p, B_p > 0$ such that for any sequence of real numbers
$\{c_k\}_{k=1}^\infty$ we have 
\begin{equation}\label{ineq 1}
A_p \|(c_k)\|_{\ell^2}\leq \Big\|\sum_{k=1}^\infty c_k r_k\Big\|_{p} \leq B_p \|(c_k)\|_{\ell^2},
\end{equation}
where $\|f\|_p:=\|f\|_{L_p[0,1]}$, $\|(c_k)\|_{\ell^2}:=\left(\sum_{k=1}^\infty c_k^2 \right)^{1/2}$ and
$r_k$'s  are the Rademacher functions, that is, $r_k(t) = {\rm sign}
(\sin 2^k \pi t), ~ k \in {\Bbb N}, t \in [0, 1].$
Thus, for every $0<p<\infty$ the sequence $\{r_k\}$ is equivalent in $L^p$ to the standard basis of $\ell^2$.
The following notion reflects general phenomena exemplified above by the Rademacher system. A closed subspace $H$ of the space $L^p=L^p[0,1],$ 
$1\le p<\infty,$ is said to be a {\it $\Lambda(p)$-space} if, in $H$, convergence in $L^p$-norm is equivalent to convergence in measure. Equivalently, for each (for some) $0<q<p$ there exists  
a constant $C_q>0$ such that  
\begin{equation}\label{ineq 1 extra}
 \|f\|_p\le C_q\|f\|_q\;\;\mbox{for all}\;\;f\in H
\end{equation}
(see e.g. \cite[Proposition~6.4.5]{AK}).

Thus, inequality \eqref{ineq 1} shows that
the closed linear span $[r_k]$ (say, in $L^1$) is a $\Lambda(p)$-space for every $1\le p<\infty.$


It is worth emphasizing that originally the concept of a $\Lambda(p)$-space was introduced by Rudin
\cite{Rud}, in the special setting of Fourier analysis on the circle group
$[0,2\pi).$ Namely, let $0<p<\infty.$ A set $E\subset \mathbb{Z}$ is called a {\it $\Lambda(p)$-set}
if for some $0<q<p$ there exists a constant $C_q>0$ such that inequality \eqref{ineq 1 extra}
holds for every polynomial $f$ with spectrum 
(i.e., support of Fourier transform) in $E$. Equivalently, the subspace
$H=L_E$ generated by a set of exponentials $\{e^{2\pi inx},$ $n\in E\}$ is a $\Lambda(p)$-space. 
In \cite{Rud}, for all integers $n>1$, Rudin constructed 
$\Lambda(2n)$-sets that are not $\Lambda(q)$-sets for every $q>2n.$ In 1989, Bourgain proved
a deep result extending Rudin's theorem to all $p>2$ \cite{bour}. In contrast to that, in 1974, Bachelis and Ebenstein \cite{BachEb} showed that in the case when $p\in (1,2)$ every $\Lambda(p)$-set
is a $\Lambda(q)$-set for some $q>p.$ Moreover, in \cite[Theorem~13]{Ros} Rosenthal proved that, for every $1\le p<2$ and each closed linear subspace $H$ of $L^p$ the following conditions are equivalent:

(a) $H$ does not contain any subspace isomorphic to $\ell^p$;

(b) $H$ is a $\Lambda(p)$-space;

(c) the unit ball $B_H$ of $H$ has equi-absolutely continuous norms in $L^p$\footnote{Note that implication $(c)\Longrightarrow (b)$ is trivial (see also Lemma  \ref{prop 1} below).}.

\vskip0.3cm

According to Definition~6.4.4 from \cite{AK}, we expand the notion of a $\Lambda(p)$-subset, saying that  a subspace $H$ of a rearrangement invariant function space $X$ on $[0,1]$ (for any undefined terminology and notation see in Section \ref{prel} below) is {\it strongly embedded} in $X$ if on $H$ convergence in the $X$-norm is equivalent to convergence in measure. The main purpose of the paper is extending the above Rosenthal theorem to the setting of Orlicz function spaces.
More precisely, we look for necessary and sufficient conditions on an Orlicz function $M$, under which the unit ball $B_H$ of an arbitrary strongly embedded subspace $H$ of the Orlicz space $L_M$ has equi-absolutely continuous norms in $L_M$.

We note, first, that the condition $1\le p<2$ in  Rosenthal's theorem is essential, and an analogue of this theorem may only hold for spaces lying between $L^1$ and $L^2$. Therefore, we will always assume that, for the Matuszewska-Orlicz indices $\alpha_M^\infty$ and $\beta_M^\infty$ of an Orlicz function $M$ which generates the given Orlicz space $L_M$, the inequality $1<\alpha_M^\infty\le \beta_M^\infty<2$ holds.  Then, the main result (Theorem \ref{theorem-main}) shows that an extension of Rosenthal's theorem holds for an Orlicz space $L_M$ whenever $1/\psi^{-1}\not\in L_M$ for every $\psi\in C_{M}^{\infty}$.
Note that condition (a) in this more general setting takes the form: a subspace $H$ does not contain any infinite-dimensional subspace isomorphic to a subspace spanned in $L_M$ by pairwise disjoint functions. On the other hand, Rosenthal's theorem does not extend to an Orlicz space $L_M$ if there is a function $\psi\in C_{M}^{\infty}$ satisfying some submultiplicativity condition and such that $1/\psi^{-1}\in L_M$ (see Theorem \ref{theorem-main2}).

In the case of Orlicz functions regularly varying at $\infty$ we have the following simple criterion: if $M$ is such an Orlicz function of order $p$, then the unit ball $B_H$ of an arbitrary strongly embedded subspace $H$ of the Orlicz space $L_M$ has equi-absolutely continuous norms in $L_M$ if and only if $t^{-1/p}\not\in L_M$ (see Corollary \ref{reg var}). Moreover, the same criterion still holds for an arbitrary Orlicz function $M$ such that $1<\alpha_M^\infty\le \beta_M^\infty<2$  if we confine ourselves to considering only subspaces of an Orlicz space that are isomorphic to Orlicz sequence spaces (see Theorem \ref{theorem-main5}.

The proof of Rosenthal's theorem in \cite{Ros} (see also \cite[Theorem~7.2.6]{AK}), based on applying the theory of absolute summing operators, strongly influenced the development of factorization theory (see, for instance, \cite[Chapter~7]{AK}). 
Unlike this, our approach does not use the technique of absolutely summing operators and seems to be simpler. It arose as a result of comparing the simple fact that every sequence of pairwise disjoint normed functions generates in $L^p$ the space $\ell^p$ with implication $(b)\Longrightarrow (a)$ of Rosenthal's theorem that says that $L^p$, $1\le p<2$, does not contain any $\Lambda(p)$-subspace, which is isomorphic to $\ell^p$.
This comparison suggests that an extension of Rosenthal's theorem is valid for an Orlicz space $L_M$ whenever it does not contain strongly embedded subspaces isomorphic to Orlicz sequence spaces generated in $L_M$ by pairwise disjoint functions (see Proposition \ref{prop1}) . Another important ingredient of the proof is the author's results obtained in the paper \cite{A-16}, which, in particular, imply that the space $L_M$, with $1<\alpha_M^\infty\le \beta_M^\infty<2 $, contains the function $1/\psi^{-1}$ provided that $L_M$ has a strongly embedded subspace isomorphic to the Orlicz sequence space $\ell_\psi$.

\section{Preliminaries}
\label{prel}

In what follows given two positive functions (quasinorms) $F_1$ and
$F_2$ are said to be  equivalent (we write $F_1\asymp F_2$) if there
exists a positive constant $C$ that does not depend on the arguments of $F_1$ and $F_2$ such that $C^{-1}F_1\leq F_2\leq
CF_1$. Sometimes, we say that these functions are equivalent for
large (or small) values of the argument, meaning that the
preceding inequalities hold only for its specified values.
Throughout we use the letters $C$ and $c$ to denote positive constants
whose values may change at different occurrences.

\subsection{Rearrangement invariant spaces}
\label{prel1}

Detailed exposition of theory of rearrangement invariant spaces see
in \cite{KPS,LT,BSh}.

A Banach space $X$ of real-valued Lebesgue--measurable functions on
the measure space $(I,m)$, where $I=[0,1]$ or $(0,\infty)$ and $m$ is the Lebesgue measure, is called {\it rearrangement invariant} (in brief, r.i.) (or {\it symmetric}) if from the conditions $y \in X$ and
$x^*(t)\le y^*(t)$ almost everywhere (a.e.) on $I$ it follows that $x\in X$ and ${\|x\|}_X \le {\|y\|}_X.$ Here and below, $x^*(t)$ is the right-continuous nonincreasing {\it rearrangement} of $|x(s)|$, i.e., 
$$
x^{*}(t):=\inf \{ \tau\ge 0:\,n_x(\tau)\le t \},\;\;0<t<m(I),$$
where
$$
n_x(\tau):=m\{s\in I:\,|x(s)|>\tau\},\;\;\tau>0.$$
If $x$ and $y$ are {\it equimeasurable} functions, that is, $n_x(\tau)=n_y(\tau)$ for all $\tau>0$, and $y\in X$, then $x\in X$ and ${\|x\|}_X ={\|y\|}_X.$ In particular, any measurable function $x(t)$ and its rearrangement $x^*(t)$ are  equimeasurable.

Every r.i.\ space $X$ on $[0,1]$ (resp. $(0,\infty)$) satisfies  the embeddings $L^\infty[0,1]{\subseteq} X{\subseteq} L^1[0,1]$
(resp. 
$$
(L^1\cap L^\infty)(0,\infty){\subseteq} X {\subseteq} (L^1+L^\infty)(0,\infty)).
$$

The {\it fundamental function} of $X$ is defined by $\phi_X(t):=\|\chi_A\|_X$, where $\chi_A$ is the characteristic function of a measurable set $A\subset I$ with $m(A)=t$.  The function $\phi_X$ is {\it quasi-concave} (i.e., $\phi_X(0)=0$, $\phi_X$ increases and $\phi_X(t)/t$ decreases).

For any $\tau>0$, the {\it dilation operator} ${\sigma}_\tau x(t):=x(t/\tau)\chi_{(0,\min\{1,\tau\})}(t)$, $0\le t\le 1$, is bounded in any r.i. space $X$ on $[0,1]$ and $\|{\sigma}_\tau\|_{X\to X}\le \max(1,\tau)$; see e.g. \cite[Theorem 2.4.4]{KPS}.

If $X$ is a r.i. space on the interval $[0,1]$, then {\it the
K\" othe dual} space $X'$ consists of all measurable functions $y$ such that
$$
\|y\|_{X'}\,\,=\,\,\sup\,\biggl\{\int_{0}^1{x(t)y(t)\,dt}:\;\;
\|x\|_{X}\,\leq{1}\biggr\}\,<\,\infty.
$$
The space $X'$ is r.i. as well; it is embedded into the dual space
$X^*$ of $X$ isometrically, and $X'=X^*$ if and only if $X$ is
separable. A r.i. space $X$ is said {\it to have the Fatou
property} if the conditions $x_n\in X$, $n=1,2,\dots,$
$\sup_{n=1,2,\dots}\|x_n\|_X<\infty$, and $x_n\to{x}$ a.e. imply
that $x\in X$ and $||x||_X\le \liminf_{n\to\infty}{||x_n||_X}.$  A r.i. space $X$ has the Fatou property if and only if the natural embedding of $X$
into its second K\" othe dual $X''$ is an isometric surjection. 

Quite similarly we can define also r.i.\ sequence spaces. In particular, the {\it fundamental function} of a  r.i.\ sequence space $X$ is defined by $\phi_{X}(n):=\|\sum_{k=1}^n e_k\|_{X}$, $n=1,2,\dots$. In what follows, $e_k$ are the canonical unit vectors, i.e., $e_k=(e_k^i)_{i=1}^\infty$, $e_k^i=0$ for $i\ne k$ and $e_k^k=1$, $k,i=1,2,\dots$.

The family of r.i.\ spaces includes many classical spaces 
appearing in analysis, in particular, $L^p$-spaces, Orlicz, Marcinkiwicz, Lorentz spaces and many others.

Let $1\le p<\infty$, and let $\varphi$ be an increasing concave function on $I$ such that $\varphi(0)=0$.
The {\it Lorentz space} $\Lambda_p(\varphi):=\Lambda_p(\varphi)[0,1]$ consists of all functions $x(t)$ measurable on $[0,1]$ and satisfying the condition:
\begin{equation}
\label{eqLor} 
\|x\|_{\Lambda_p(\varphi)}:=\Big(\int_0^1 x^{*}(t)^{p} 
d\varphi(t) \Big)^{1/p}<\infty
\end{equation} 
(see \cite{KPS}, \cite[p. 121]{LT}). In particular, $(\Lambda_1(\varphi))'=M(\varphi)$ \cite[Theorem~II.5.2]{KPS}, where $M(\varphi)$ is the {\it Marcinkiewicz space} equipped with the norm
$$
{\|x\|}_{M(\varphi)}:=\sup\limits_{0<t\le
1}\,\frac{1}{\varphi(t)}\int_{0}^{t}x^{\ast}(s)\,ds.$$

For every $1\le p<\infty$ and $\varphi$, $\Lambda_p(\varphi)$ is a separable r.i. space with the Fatou property, while $M(\varphi)$ is a non-separable r.i. space (if $\lim_{t\to 0}\varphi(t)/t=\infty$) with the Fatou property, and $\phi_{\Lambda_p(\varphi)}(t)=\varphi(t)^{1/p}$, $\phi_{M(\varphi)}(t)=t/\varphi(t)$, $0<t\le 1$.

Some preliminaries from the theory of Orlicz spaces, which are the main subject of this paper, we collect in the next subsection.

\subsection{Orlicz function and sequence spaces}
\label{prel2}

Most important examples of r.i. spaces are the $L_p$-spaces, $1 \leq p \leq\infty$, and their natural generalization, the Orlicz spaces (for their detailed theory we refer to the monographs \cite{KR,RR,Mal}).

Let $M$ be an Orlicz function, that is, an increasing convex continuous function on $[0, \infty)$ such that $M(0) = 0$ and $\lim_{t\to\infty}M(t)=\infty$. In what follows, we assume also that $M(1)=1$. Denote by $L_{M}:=L_M(I)$ the {\it Orlicz space} endowed with the Luxemburg--Nakano norm
$$
\| f \|_{L_{M}}: = \inf \left\{\lambda > 0 \colon \int_I M\Big(\frac{|f(t)|}{\lambda}\Big) \, dt \leq 1 \right\}.
$$
In particular, if $M(u)=u^p$, $1\le p<\infty$, we obtain $L_p$.

Note that the definition of an Orlicz function space $L_M[0,1]$ depends (up to equivalence of norms) only on the behaviour of the function $M(t)$ for large values of argument  $t$. The fundamental function $\phi_{L_M}(u)=1/M^{-1}(1/u)$, $0<u\le 1$, where $M^{-1}$ is the inverse function.

If $M$ is an Orlicz function, then the {\it Young conjugate} function $\tilde{M}$ is defined by
$$
\tilde{M}(u):=\sup_{t>0}(ut-M(t)),\;\;u>0.$$
Note that $\tilde{M}$ is also an Orlicz function and the Young conjugate for $\tilde{M}$ is $M$.

Every Orlicz space $L_M(I)$ has the Fatou property; $L_M[0,1]$ (resp. $L_M(0,\infty)$) is separable if and only if the function $M$ satisfies the {\it $\Delta_2^\infty$-condition} (resp. {\it $\Delta_2$-condition}), i.e., $\sup_{u\ge 1}{M(2u)}/{M(u)}<\infty$ (resp.  $\sup_{u>0}{M(2u)}/{M(u)}<\infty$). In this case we have  $L_M(I)^*=L_M(I)'=L_{\tilde{M}}(I)$.

Let $M$ be an Orlicz function, $M\in \Delta_{2}^{\infty}$. Define the following subsets of the space $C[0, 1]$:

$$
E_{M, A}^{\infty}: = \overline{\big\{ N(x): = \frac{M(xy)}{M(y)} \ : y > A \big\}}\;(A>0), 
E_{M}^{\infty}: = \bigcap_{A > 0}E_{M, A}^{\infty}, \ \ C_{M}^{\infty}: = \overline{{\rm conv}\, E_{M}^{\infty}},
$$
where ${\rm conv}\,U$ is the convex hull of a set $U$ and the closure is taken in the space $C[0,1].$ All these sets are non-void, compact in $C[0,1]$ and consist of Orlicz functions \cite[Lemma~4.a.6]{LT77}. It is well known that the sets $E_{M}^{\infty}$ and $C_{M}^{\infty}$ rather fully determine the structure of subspaces spanned by sequences of disjoint functions in the Orlicz space $L_M[0,1]$. In particular, $L_M[0,1]$ contains a sequence of pairwise disjoint functions $\{f_n\}_{n=1}^\infty$ such that their span $[f_n]$ is isomorphic to an Orlicz space $\ell_\psi$ if and only if $\psi\in C_{M}^{\infty}$  \cite[Proposition~4]{LTIII}.

Let $M$ be an Orlicz function and $1 \leq p < \infty.$ 
Then, $t^{p} \in C_{M}^{\infty}$ if and only if $p \in [\alpha_{M}^{\infty}, \beta_{M}^{\infty}],$ where $\alpha_{M}^{\infty}$ and $\beta_{M}^{\infty}$ are the {\it lower and upper Matuszewska-Orlicz indices for large arguments or at infinity} defined by
$$
\alpha_{M}^{\infty}: = \sup \big\{ p : \sup_{t, s \geq 1} \frac{M(t)s^{p}}{M(ts)} < \infty \big\}, \ \ \ \beta_{M}^{\infty}: = \inf \big\{ p : \inf_{t, s \geq 1} \frac{M(t)s^{p}}{M(ts)} > 0 \big\}
$$
(see \cite{LTIII} or \cite[Proposition~5.3]{KamRy}). It is easy to check that $1 \leq \alpha_{M}^{\infty} \leq \beta_{M}^{\infty} \leq \infty$. 

Note that the Matuszewska-Orlicz indices $\alpha_{M}^{\infty}$ and $\beta_{M}^{\infty}$ are a partial case of the so-called Boyd indices, which can be defined for every r.i.\ space on $[0,1]$ or $(0,\infty)$ (see, e.g., \cite[Definition~2.b.1]{LT} or \cite[\S\,II.4, p.~134]{KPS}).


\vskip0.2cm

Similarly, we can define an {\it Orlicz sequence space}. Specifically, the space $\ell_{\psi}$, where $\psi$ is an Orlicz function, consists of all sequences $(a_{k})_{k=1}^{\infty}$ such that 
$$
\| (a_{k})_{k=1}^{\infty}\|_{\ell_{\psi}} := \inf\left\{u>0: \sum_{k=1}^{\infty} \psi \Big( \frac{|a_{k}|}{u} \Big)\leq 1\right\}<\infty.
$$
Clearly, if $\psi(t)=t^p$, $p\ge 1$, then $l_\psi=\ell^p$ isometrically.

The fundamental function of an Orlicz sequence space $\ell_{\psi}$ can be calculated by the formula: $\phi_{\ell_{\psi}}(n)=\frac{1}{\psi^{-1}(1/n)}$, $n=1,2,\dots$ Furthermore, an Orlicz sequence space $\ell_{\psi}$ is separable if and only if  $\psi$ satisfies the $\Delta_{2}^{0}$-condition ($\psi\in \Delta_{2}^{0}$), that is, 
$$
\sup_{0<u\le 1}{\psi(2u)}/{\psi(u)}<\infty.$$
In this case we have  $\ell_{\psi}^*=\ell_{\psi}'=\ell_{\tilde{\psi}}$, with the Young conjugate function  $\tilde{\psi}$ for $\psi$.

Observe that the definition of an Orlicz sequence space $\ell_{\psi}$ depends (up to equivalence of norms) only on the behaviour of the function $\psi$ near zero. More precisely, if $\varphi,\psi \in \Delta_{2}^{0}$, the following conditions are equivalent: (1) $\ell_{\psi}=\ell_{\varphi}$ (with equivalence of norms); 2) the unit vector bases of the spaces $\ell_{\psi}$ and $\ell_{\varphi}$ are equivalent; 3) there are $C > 0$ and $t_{0} > 0$ such that for all $0 \leq t \leq t_{0}$ it holds
$$
C^{-1}\psi(t) \leq \varphi(t) \leq C\psi(t)
$$ 
(cf. \cite[Proposition~4.a.5]{LT77} or \cite[Theorem~3.4]{Mal}). In the case when $\psi$ is a {\it degenerate} Orlicz function, i.e., there is $t> 0$ such that $\psi(t)=0$, we have $\ell_{\psi}=\ell_\infty$ (with equivalence of norms).

One can easily see (cf. \cite[Proposition~4.a.2]{LT77}) that the canonical unit vectors $e_n$, $n=1,2,\dots$, form a symmetric basis of an Orlicz sequence space $\ell_{\psi}$ provided if $\psi\in \Delta_{2}^{0}$. Recall that a basis $\{x_n\}_{n=1}^\infty$ of a Banach space $X$ is called {\it symmetric} if there exists $C>0$ such that for an arbitrary permutation $\pi:\,\mathbb{N}\to\mathbb{N}$ and any $a_n\in\mathbb{R}$ we have
$$
C^{-1}\Big\|\sum_{n=1}^{\infty}a_nx_n\Big\|_X\le \Big\|\sum_{n=1}^{\infty}a_nx_{\pi(n)}\Big\|_X\le C\Big\|\sum_{n=1}^{\infty}a_nx_n\Big\|_X.$$

Given an Orlicz function $\psi$ we define the {\it lower and upper Matuszewska-Orlicz indices for small arguments or at zero} $\alpha_{\psi}^{0}$ and $\beta_{\psi}^{0}$ as follows:
$$
\alpha_{\psi}^{0}: = \sup \big\{ p : \sup_{0<t, s \leq 1} \frac{\psi(st)}{s^{p}\psi(t)} < \infty \big\}, \ \ \ \beta_{\psi}^{0}: = \inf \big\{ p : \inf_{0<t, s \leq 1} \frac{\psi(st)}{s^{p}\psi(t)} > 0 \big\}.
$$
Similarly as for the indices at infinity, we have $1 \leq \alpha_{\psi}^{\infty} \leq \beta_{\psi}^{\infty} \leq \infty$ (see e.g. \cite[Chapter~4]{LT77}). Moreover, the space $\ell^p$, or $c_0$ if $p=\infty$, is isomorphic to a subspace of $\ell_\psi$ if and only if $p\in [\alpha_{\psi}^{0},\beta_{\psi}^{0}]$ 
\cite[Theorem~4.a.9]{LT77}.

 Let $1\leq p<q<\infty$. We say that an Orlicz function $M$ is $p$-{\it convex} if the map $t \mapsto M(t^{1/p})$ is convex
and $q$-{\it concave} if the map $t \mapsto M(t^{1/q})$ is concave. It
is easy to verify that an Orlicz space $L_M[0,1]$ is $p$-convex
($q$-concave) if and only if $M(t)$ is equivalent to a $p$-convex
($q$-concave) Orlicz function for large values of $t$. Similarly,
an Orlicz sequence space $l_M$ is $p$-convex ($q$-concave) if and
only if  $M(t)$ is equivalent to a $p$-convex ($q$-concave) Orlicz
function for small values of $t$. Recall that a Banach lattice $X$
is said to be {\it $p$--convex} ($1 \leq p \le\infty$),
respectively, {\it $q$--concave} ($1 \leq q \le \infty$) if
$$
\Big\|\Big(\sum_{k=1}^n |x_k|^p\Big)^{1/p}\Big\|_X \leq C \Big(\sum_{k=1}^n \|x_k\|_X^p\Big)^{1/p},
$$
respectively,
$$
\Big(\sum_{k=1}^n\|x_k\|_X^q\Big)^{1/q} \leq C \Big\| \Big(\sum_{k=1}^n |x_k|^q\Big)^{1/q}\Big\|_X
$$
(with a natural modification in the case $p=\infty$ or $q=\infty$)
for some constant $C>0$ and every choice of vectors $x_{1}, x_{2}, \dots, x_{n}$ in $X$. Of course, every Banach lattice is $1$-convex and $\infty$-concave with constant $1$. Also, the space $L^p$, $1\\le p\le\infty$, is $p$-convex and $p$-concave with constant $1$.

An Orlicz function $M$ such that $M\in \Delta_{2}^{\infty}$ is said to be {\it regularly varying at $\infty$} (in sense of Karamata) if the limit $\lim_{t\to\infty}M(tu)/M(t)$ exists for all $u>0$ (in fact, it suffices that the limit exists when $0<u\le 1$). Then, there is a $p$, $1\le p<\infty$, such that $\lim_{t\to\infty}M(tu)/M(t)=u^p$; in this case $M$ is called {\it regularly varying of order $p$}.




\subsection{The characteristic $\eta_X(K)$ and subsets of r.i.\ function spaces with equi-absolutely continuous norms}
\label{prel3}

Let $X$ be a r.i.\ space on $[0,1]$. For any set $K\subset X$, let
$$
\eta_X(K):=\lim_{t\to 0}\sup_{x\in K,x\ne 0}\frac{\|x^*\chi_{[0,t]}\|_X}{\|x\|_X}.$$
The characteristic $\eta_X(K)$ was explicitly introduced
by Tokarev in \cite{Tok}, although for $X=L_1$ it had arisen much earlier in the classical paper by Kadec and Pe{\l}czy{\'n}ski  \cite{KadPel}. Later on, this characteristic was studied in \cite{NST} and \cite{NST2}. 

Let $X$ be a r.i.\ space. Clearly, one always has $0\le \eta_X(K)\le 1$. It can be also easily seen that the inequality $\eta_X(K)<1$ implies the equivalence of the $X$- and $L^1$-norms on the set $K$. Moreover, if additionally $X\ne L^1$ and $K$ is a closed linear subspace of $X$, we get that $K$ is strongly embedded into $X$ (see the proof of Proposition~6.4.5 in \cite{AK}). If $X$ is separable, the latter condition, in view of the generalization of the well-known Kadec-Pe{\l}czy{\'n}ski alternative (see \cite[Theorem~4.1]{FJT}, \cite[Proposition~1.c.8 and its proof]{LT}, or \cite[Lemma~5.2.1]{AK} for $X=L^1$), is equivalent to the fact that $K$ does not contain almost disjoint sequences (i.e., there is no sequence $\{x_n\}_{n=1}^\infty\subset K$, $\|x_n\|_X=1$, such that for some pairwise disjoint functions $u_n\in X$, $n=1,2,\dots$, it holds $\|u_n-x_n\|_X\to 0$ as $n\to\infty$). 

In the case when $\eta_X(K)=0$ we can say about the set $K\subset X$ much more.


Let $X$ be a r.i.\ space on $[0,1].$ We shall say that a set
$K\subset X$ has {\it equi-absolutely continuous norms} in $X$ if
$$
\lim_{\delta\to 0}\sup_{m(E)<\delta}\sup_{x\in K}\|x\chi_{E}\|_X=0.$$

The proofs of the following lemmas are immediate and hence they are skipped. 
\begin{lemma}\label{lemma 2}
For arbitrary r.i. space $X$ on $[0,1]$ a bounded set $K\subset X$ has equi-absolutely continuous norms in $X$ if and only if
$\eta_X(K)=0$.

\end{lemma}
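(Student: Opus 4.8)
The plan is to reduce both properties in the statement to the behaviour of the single quantity $\sup_{x\in K}\|x^{*}\chi_{[0,t]}\|_{X}$ as $t\to 0$, the link being the elementary identity
\[
\|x^{*}\chi_{[0,t]}\|_{X}=\sup\bigl\{\,\|x\chi_{E}\|_{X}:\ E\subseteq[0,1],\ m(E)\le t\,\bigr\},\qquad x\in X,\ 0<t\le 1 .
\]
First I would establish this identity. The inequality ``$\ge$'' is immediate: for any $E$ with $m(E)\le t$ we have $|x\chi_{E}|\le|x|$ and the support of $x\chi_{E}$ has measure at most $t$, so $(x\chi_{E})^{*}\le x^{*}\chi_{[0,t]}$ pointwise, and the rearrangement invariance of $X$ yields $\|x\chi_{E}\|_{X}\le\|x^{*}\chi_{[0,t]}\|_{X}$. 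For ``$\le$'' I would take the set $E_{t}$ on which $|x|$ assumes its largest values, namely the superlevel set $\{\,|x|>x^{*}(t)\,\}$ enlarged inside $\{\,|x|=x^{*}(t)\,\}$ so as to have measure exactly $t$; then $x\chi_{E_{t}}$ is equimeasurable with $x^{*}\chi_{[0,t]}$, whence $\|x\chi_{E_{t}}\|_{X}=\|x^{*}\chi_{[0,t]}\|_{X}$.

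Granting the identity, I would take the supremum over $x\in K$ and over sets of small measure. Since the two suprema commute, the definition of equi-absolutely continuous norms becomes the assertion that $\lim_{t\to0}\sup_{x\in K}\|x^{*}\chi_{[0,t]}\|_{X}=0$, while $\eta_{X}(K)=0$ means, by definition, that the \emph{normalized} supremum $\sup_{x\in K,\,x\ne0}\|x^{*}\chi_{[0,t]}\|_{X}/\|x\|_{X}$ tends to $0$. Thus the lemma reduces to comparing these two suprema, and it is precisely here that the boundedness of $K$ is used.

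Put $B:=\sup_{x\in K}\|x\|_{X}<\infty$. For the implication $\eta_{X}(K)=0\Rightarrow K$ has equi-absolutely continuous norms I would argue directly: for every $x\in K\setminus\{0\}$ and every $t$,
\[
\|x^{*}\chi_{[0,t]}\|_{X}=\frac{\|x^{*}\chi_{[0,t]}\|_{X}}{\|x\|_{X}}\,\|x\|_{X}\le B\,\sup_{y\in K,\,y\ne0}\frac{\|y^{*}\chi_{[0,t]}\|_{X}}{\|y\|_{X}} ,
\]
so the unnormalized supremum is at most $B$ times the normalized one and therefore also tends to $0$; hence $K$ has equi-absolutely continuous norms in $X$.

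The reverse implication is the step I expect to be the main obstacle, since it requires recovering the normalized supremum from the unnormalized one. My plan is to argue by contraposition. Because $t\mapsto\|x^{*}\chi_{[0,t]}\|_{X}$ is nondecreasing, if $\eta_{X}(K)=c>0$ then the normalized supremum is $\ge c$ for every $t$, so for each $t$ there is $x_{t}\in K$ with $\|x_{t}^{*}\chi_{[0,t]}\|_{X}\ge\tfrac{c}{2}\|x_{t}\|_{X}$, giving $\sup_{x\in K}\|x^{*}\chi_{[0,t]}\|_{X}\ge\tfrac{c}{2}\|x_{t}\|_{X}$. The delicate point is to prevent the norms $\|x_{t}\|_{X}$ from degenerating as $t\to0$: securing a uniform lower bound on these norms, so that $\sup_{x\in K}\|x^{*}\chi_{[0,t]}\|_{X}$ stays bounded away from $0$ and contradicts equi-absolute continuity, is the crux of the argument, and this is the role that must be played by the hypothesis that $K$ is bounded.
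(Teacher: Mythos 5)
Your reduction identity $\|x^{*}\chi_{[0,t]}\|_{X}=\sup\{\|x\chi_{E}\|_{X}:m(E)\le t\}$ and your proof of the implication $\eta_X(K)=0\Rightarrow K$ has equi-absolutely continuous norms are both correct; this is the half the paper regards as immediate (the paper in fact prints no proof of this lemma at all, declaring both lemmas of Section 2.3 immediate). The problem is the converse, which you explicitly leave open. The role you assign to boundedness there cannot be played by it: boundedness of $K$ is an \emph{upper} bound on norms, and no upper bound can supply the uniform \emph{lower} bound on $\|x_t\|_X$ that your contraposition needs. Indeed, for arbitrary bounded $K$ the converse is false. Take $K=\{\chi_{[0,1/n]}:n\in\mathbb{N}\}$ in $X=L^1$ (or in any r.i.\ space with $\phi_X(0+)=0$, e.g.\ any Orlicz space $L_M[0,1]$). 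Then $\sup_{x\in K}\|x\chi_E\|_X\le\phi_X(m(E))\to0$ as $m(E)\to0$, so $K$ has equi-absolutely continuous norms; yet for each $t>0$ and $n\ge1/t$ one has $\|(\chi_{[0,1/n]})^{*}\chi_{[0,t]}\|_X/\|\chi_{[0,1/n]}\|_X=\phi_X(1/n)/\phi_X(1/n)=1$, whence $\eta_X(K)=1$. So the step you flag as ``the crux'' is not merely delicate — it would fail.

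The statement is rescued by the extra (implicit) hypothesis that $K$ is also bounded away from zero, or, what suffices for every application in the paper, that $K$ is the unit ball $B_H$ of a subspace. The quantity $\eta_X$ is invariant under rescaling each element separately, so $\eta_X(B_H)=\eta_X(S_H)$, where $S_H$ is the unit sphere; likewise $\sup_{x\in B_H}\|x\chi_E\|_X=\sup_{x\in S_H}\|x\chi_E\|_X$, since replacing $x$ by $x/\|x\|_X$ only increases $\|x\chi_E\|_X$ when $\|x\|_X\le1$. On $S_H$ all norms equal $1$, the normalized and unnormalized suprema in your reduction coincide, and your identity then gives the equivalence at once. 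Equivalently, under the hypothesis $\inf_{x\in K,\,x\ne0}\|x\|_X>0$ your contraposition argument goes through verbatim, with $\|x_t\|_X$ bounded below by that infimum. You should either add such a hypothesis or restrict to normalization-stable sets; as literally stated for arbitrary bounded $K$, the ``only if'' direction you could not finish is genuinely untrue.
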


Recall (see Section \ref{Intro}) that a subspace $H$ of a r.i.\ space $X$ on $[0,1]$ is  strongly embedded in $X$ if, in $H$, convergence in the $X$-norm is equivalent to convergence in measure.
Throughout by $B_H$ we denote the unit ball of a closed subspace $H$ of a r.i.\ space $X$, that is, $B_H:=\{x\in H:\,\|x\|_X\le 1\}$.

\begin{lemma}\label{prop 1}
Let $H$ be a closed subspace of a r.i.\ space $X$ on $[0,1].$ If the set $B_H$ has equi-absolutely continuous norms in $X,$ then $H$ is strongly embedded into $X$.
\end{lemma}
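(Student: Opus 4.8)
The plan is to establish the nontrivial implication of the equivalence defining strong embeddedness, since one direction is automatic and uses nothing about $B_H$. Indeed, for every r.i.\ space $X$ on $[0,1]$ there is a continuous embedding $X\subseteq L^1[0,1]$, so $X$-norm convergence always forces $L^1$-convergence and hence convergence in measure. Moreover, both convergence in $X$-norm and convergence in measure are induced by translation-invariant metrics on the vector space $H$, so to see that the two topologies coincide on $H$ it suffices to compare the sequences tending to $0$. Thus the whole statement reduces to showing that, for $\{x_n\}\subset H$, convergence $x_n\to 0$ in measure implies $\|x_n\|_X\to 0$.

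I would argue by contradiction. Suppose $\{x_n\}\subset H$ satisfies $x_n\to 0$ in measure but $\|x_n\|_X\not\to 0$; passing to a subsequence we may assume $\|x_n\|_X\ge\varepsilon$ for some $\varepsilon>0$ and all $n$. Normalize by setting $y_n:=x_n/\|x_n\|_X$, so that $\|y_n\|_X=1$ and in particular $y_n\in B_H$. The key preliminary observation is that $y_n\to 0$ in measure as well: for each $\tau>0$ we have $\{|y_n|>\tau\}=\{|x_n|>\tau\|x_n\|_X\}\subseteq\{|x_n|>\tau\varepsilon\}$, whose measure tends to $0$ because $x_n\to 0$ in measure.

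Now I would split each $y_n$ according to its level sets. Fix $\lambda>0$ and put $E_n:=\{|y_n|>\lambda\}$. On the complement we have $|y_n|\le\lambda$, whence $\|y_n\chi_{E_n^c}\|_X\le\lambda\|\chi_{[0,1]}\|_X=\lambda\,\phi_X(1)$. On $E_n$ we use that $y_n\in B_H$ together with $m(E_n)\to 0$ (a consequence of convergence in measure): the very definition of equi-absolutely continuous norms of $B_H$ gives $\|y_n\chi_{E_n}\|_X\le\sup_{x\in B_H}\|x\chi_{E_n}\|_X\to 0$. Combining the two estimates yields $\limsup_{n\to\infty}\|y_n\|_X\le\lambda\,\phi_X(1)$, and since $\lambda>0$ is arbitrary we conclude $\|y_n\|_X\to 0$, contradicting $\|y_n\|_X=1$. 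This contradiction proves the claim.

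I expect this argument to be entirely routine, in line with the paper's remark that the proof is immediate; there is no deep obstacle. The only points requiring a little care are the verification that normalization preserves convergence in measure, which is precisely where the lower bound $\|x_n\|_X\ge\varepsilon$ enters, and the correct application of the equi-absolute continuity hypothesis to the vanishing-measure sets $E_n$, for which one must first know that the normalized vectors $y_n$ lie in the unit ball $B_H$.
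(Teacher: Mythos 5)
Your proof is correct: the paper itself skips this argument (declaring it immediate), and your reasoning—reducing to null sequences, normalizing to get $y_n\in B_H$ with $y_n\to 0$ in measure thanks to the lower bound $\|x_n\|_X\ge\varepsilon$, then splitting $y_n=y_n\chi_{\{|y_n|>\lambda\}}+y_n\chi_{\{|y_n|\le\lambda\}}$ and using equi-absolute continuity on the small-measure sets and the bound $\lambda\,\phi_X(1)$ elsewhere—is precisely the routine argument the author intended. The two delicate points you flag (normalization preserving convergence in measure, and the correct application of the hypothesis to the sets $E_n$ via $y_n\in B_H$) are exactly the right ones, and both are handled correctly.
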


Following to \cite{NST2}, we will say that a r.i.\ space $X$ on $[0,1]$ is {\it binary} if the characteristic $\eta_X(H)$ takes on closed linear subspaces $H$ of $X$ only two values: $0$ and $1$. From the above results it follows that a r.i.\ space $X$ is binary if the unit ball of each closed subspace $H\subset X$ that is strongly embedded in $X$ has equi-absolutely continuous norms in $X$. The converse is not true in general, because there are r.i.\ spaces $X$ that contain strongly embedded subspaces $H$ with $\eta_X(H)=1$ (see e.g.  \cite[Theorem~3]{AS-20}). 

The above discussion combined with Rosenthal's theorem \cite[Theorem~13]{Ros} (see also Section \ref{Intro}) and Lemma \ref{lemma 2} shows that the space $L^p$ is binary whenever  $1\le p<2$. In \cite{NST2} (see also \cite{NST}) this result has been extended to the class of Lorentz spaces $\Lambda_p(\varphi)$ such that $1\le p<2$. Moreover, it was proved (see \cite[Theorem~4]{NST2}) that the condition $\eta_{\Lambda_p(\varphi)}(H)=1$ implies that the subspace $H$ contains an almost disjoint sequence of functions equivalent to the unit vector basis in $\ell^p$.

\section{Auxiliary results}

\subsection{A description of subspaces of Orlicz spaces spanned by mean zero independent identically distributed functions.\\}
\label{aux1}

Given a sequence $\{f_k\}_{k=1}^\infty$ of measurable functions on $[0,1]$ we define the {\it disjoint sum} $\sum_{k=1}^\infty \oplus f_k$ to be any function ${\bf f}$ on $(0,\infty)$ with $n_{{\bf f}}(\tau)=\sum_{k=1}^\infty n_{f_k}(\tau)$ for all $\tau>0$. In particular, we can take for ${\bf f}$ the function $\sum_{k=1}^\infty f_k(t-k+1)\chi_{[k-1,k)}(t)$, $t>0$.

Let $M$ be an Orlicz function, $M\in \Delta_2^\infty$, and let $L_M=L_M[0,1]$ be the Orlicz space. Suppose that $\{f_k\}_{k=1}^\infty\subset L_M$ is a sequence of independent functions such that $\int_0^1 f_k(t)\,dt=0$, $k=1,2,\dots$.  Then, by the well-known Johnson-Schechtman result \cite[Theorem~1]{JS}, we have
\begin{equation}
\label{JoSh}
\Big\|\sum_{k=1}^\infty f_k\Big\|_{L_M}\asymp \Big\|\Big(\sum_{k=1}^\infty  \oplus f_k\Big)^*\chi_{[0,1]}\Big\|_{L_M}+\Big\|\Big(\sum_{k=1}^\infty  \oplus f_k\Big)^*\chi_{[1,\infty)}\Big\|_{L^2[1,\infty)}.
\end{equation}
We can rewrite this equivalently as follows:
\begin{equation}
\label{dis}
\Big\|\sum_{k=1}^\infty f_k\Big\|_{L_M}\asymp \Big\|\sum_{k=1}^\infty  \oplus f_k\Big\|_{L_\theta},
\end{equation}
where $L_\theta$ is the Orlicz space on $(0,\infty)$, $\theta(u)=u^2$ if $0<u\le 1$, and  $\theta(u)=M(u)$ if $u\ge 1$.

Let us assume additionally that there is a function $f\in L_M$ such that each of the functions $f_k$, $k=1,2,\dots$, is equimeasurable with $f$ . Then, substituting in equivalence \eqref{dis} scalar multiples $a_kf_k$, $a_k\in\mathbb{R}$, $k=1,2,\dots$, instead of $f_k$, by the definition of the norm in $L_M$, we obtain 
\begin{eqnarray}
\Big\|\sum_{k=1}^\infty a_kf_k\Big\|_{L_M}&\asymp&\inf\{\lambda >0:\,\int_{0}^{\infty}\theta\Big(\frac1{\lambda}\sum_{k=1}^\infty  \oplus |a_kf_k(t)|\Big)\,dt\le 1\}\nonumber\\
&=&\inf\{\lambda >0:\,\sum_{k=1}^\infty\int_{0}^{1}\theta\Big(\frac{|a_k||f(t)|}{\lambda}\Big)\,dt\le 1\}=\|(a_k)\|_{{\ell_\psi}},
\label{Luxem}
\end{eqnarray}
where 
\begin{equation}\label{psi}
\psi(u):=\int_0^1\theta(u|f(t)|)dt,\;\;u>0.
\end{equation}

Thus, $\{f_k\}_{_{k=1}}^\infty$ is equivalent in $L_M$ to the unit vector basis in the Orlicz sequence space ${\ell_\psi}$, where the function $\psi$ is defined in \eqref{psi}.

Observe that $\theta$ need not to be an Orlicz function. However, the function $\theta(t)/t$ does not decrease and, as $M\in \Delta_2^\infty$, we have $\theta\in\Delta_2$. Hence, one can easily check that $\theta$ is equivalent on $(0,\infty)$ to the Orlicz function $\tilde{\theta}(t):=\int_0^t\frac{\theta(u)}{u}\,du.$ In turn, this and \eqref{psi} imply that $\psi$ is equivalent on $(0,\infty)$ to some Orlicz function as well.

\subsection{Matuszewska-Orlicz indices and strongly embedded subspaces of Orlicz spaces.\\}
\label{aux2}

The following result is a consequence of \cite[Lemma 20]{MSS} (see also  \cite[Lemma~5]{AS-14} for the full proof) and the definition of Matuszewska-Orlicz indices.

\begin{lemma}
\label{Lemma 20}
Let $1\leq p<\infty$ and let $\psi$ be an Orlicz function on $[0,\infty)$. Then,

(i) $\psi$ is equivalent to a $p$-convex (resp. $p$-concave) Orlicz function for small values of the argument $\Longleftrightarrow$ $\psi(st)\le C s^{p}\psi(t)$ (resp. $s^p\psi(t)\le C \psi(st)$) for some $C>0$ and all $0<t,s\leq 1$;

(ii) $\psi$ is equivalent to a $(p+\varepsilon)$-convex (resp. $(p-\varepsilon)$-concave) Orlicz function for small values of the argument
and some $\varepsilon>0$ $\Longleftrightarrow$ $\alpha_\psi^0>p$ (resp. $\beta_\psi^0<p$).

\end{lemma}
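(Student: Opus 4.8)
The plan is to prove (i) first, reducing both equivalences to a monotonicity condition on the right derivative of $\psi$, and then to read off (ii) from (i) together with the definitions of $\alpha_\psi^0$ and $\beta_\psi^0$. This is precisely the content of Lemma~20 of \cite{MSS} (with the detailed argument in \cite[Lemma~5]{AS-14}), but I would prefer to carry out a direct self-contained argument.

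I would begin by reformulating both sides of (i). Writing $\Psi(t):=\psi(t)/t^p$ and substituting $v=st$, the condition $\psi(st)\le Cs^p\psi(t)$ (resp. $s^p\psi(t)\le C\psi(st)$) for all $0<s,t\le 1$ is exactly the statement that $\Psi$ is almost nondecreasing (resp. almost nonincreasing) on $(0,1]$, i.e. $\Psi(v)\le C\Psi(t)$ (resp. $\Psi(v)\ge c\Psi(t)$) whenever $0<v\le t\le 1$. On the target side I would use the derivative description of $p$-convexity: since $w\mapsto\varphi(w^{1/p})$ is convex (resp. concave) iff its right derivative is nondecreasing (resp. nonincreasing), an Orlicz function $\varphi$ is $p$-convex (resp. $p$-concave) near $0$ iff $m_\varphi(t):=\varphi'(t)/t^{p-1}$ is nondecreasing (resp. nonincreasing) near $0$. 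The easy implications of (i) are then immediate: if $\psi\asymp\varphi$ near $0$ with $\varphi$ $p$-convex, monotonicity of $w\mapsto\varphi(w^{1/p})/w$ gives $\varphi(st)\le s^p\varphi(t)$, and the equivalence $\psi\asymp\varphi$ upgrades this to $\psi(st)\le Cs^p\psi(t)$ (after the routine passage from small arguments to all of $(0,1]$); the $p$-concave case is dual.

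The substantive direction of (i) is the construction of an equivalent $p$-convex (resp. $p$-concave) Orlicz function out of the growth hypothesis, and I expect this to be the main obstacle. The key step is to transfer the almost-monotonicity from $\Psi=\psi/t^p$ to $m(t):=\psi'(t)/t^{p-1}$, where $\psi'$ is the right derivative. Using the convexity estimates $\psi(t)/t\le\psi'(t)\le\psi(2t)/t$ one gets $m(t)\ge\Psi(t)$ and $m(t)\le 2^p\Psi(2t)$; combining these with the almost-monotonicity of $\Psi$ — and, in the convex case, with the separate bound $m(v)\le 2^{p-1}m(t)$ for $t/2<v\le t$ coming directly from the monotonicity of $\psi'$ — shows that $m$ is almost monotone in the same sense as $\Psi$. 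I would then regularize $m$ monotonically, setting $\tilde m(t):=\sup_{0<v\le t}m(v)$ in the convex case and $\tilde m(t):=\inf_{0<v\le t}m(v)$ in the concave case, so that $\tilde m\asymp m$ with $\tilde m$ monotone of the required type, and define $\varphi(t):=\int_0^t s^{p-1}\tilde m(s)\,ds$. By construction $\varphi'(t)/t^{p-1}=\tilde m(t)$ is monotone, so $\varphi$ is $p$-convex (resp. $p$-concave); since $\varphi'(t)=t^{p-1}\tilde m(t)\asymp t^{p-1}m(t)=\psi'(t)$, integration gives $\varphi\asymp\psi$ near $0$. The delicate point — precisely the reason naive regularizations of $\psi$ itself fail when $\psi\notin\Delta_2^0$ — is to check that $\varphi'$ is actually nondecreasing, so that $\varphi$ is a genuine convex Orlicz function; this is immediate in the convex case (product of two nonnegative nondecreasing factors), while in the concave case it requires a direct comparison of the infima defining $t^{p-1}\tilde m(t)$ for two arguments.

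Finally, (ii) follows formally from (i). By the reformulation above, the set of exponents $p\ge 1$ for which $\psi(st)\le Cs^p\psi(t)$ holds is, by the definition of $\alpha_\psi^0$, the down-set $\{p:\ \sup_{0<s,t\le 1}\psi(st)/(s^p\psi(t))<\infty\}$, whose supremum is $\alpha_\psi^0$; since the relevant supremum is monotone in $p$, the existence of some exponent $p'=p+\varepsilon>p$ in this set is equivalent to $\alpha_\psi^0>p$. Applying (i) with exponent $p'\ge 1$ then yields that $\psi$ is equivalent near $0$ to a $(p+\varepsilon)$-convex Orlicz function for some $\varepsilon>0$ iff $\alpha_\psi^0>p$. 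The concave statement is identical, with $\beta_\psi^0=\inf\{p:\ \inf_{0<s,t\le 1}\psi(st)/(s^p\psi(t))>0\}$ and the corresponding up-set of valid exponents (noting $\beta_\psi^0\ge 1$ guarantees the selected $p'$ lies in $[1,\infty)$), giving the equivalence with $\beta_\psi^0<p$.
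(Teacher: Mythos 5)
Your proposal is correct, but it is worth noting that the paper itself contains no proof of this lemma: it is stated as a consequence of \cite[Lemma~20]{MSS} (with the detailed argument in \cite[Lemma~5]{AS-14}) together with the definition of the Matuszewska--Orlicz indices. So what you have done differently is to replace the citation for part (i) with a self-contained argument, while your deduction of (ii) from (i) --- identifying $\alpha_\psi^0$ as the supremum of the down-set of admissible convexity exponents and $\beta_\psi^0$ as the infimum of the up-set of admissible concavity exponents --- is exactly the ``definition of the indices'' step the paper alludes to. Your direct argument is sound: the reformulation of the inequality $\psi(st)\le Cs^p\psi(t)$ as almost-monotonicity of $\Psi(t)=\psi(t)/t^p$, the transfer to $m(t)=\psi'(t)/t^{p-1}$ via $\Psi(t)\le m(t)\le 2^p\Psi(2t)$ (supplemented in the convex case by $m(v)\le 2^{p-1}m(t)$ for $t/2<v\le t$), the monotone regularization $\tilde m$, and the definition $\varphi(t)=\int_0^t s^{p-1}\tilde m(s)\,ds$ all work, and the two points you flag as delicate do check out: in the concave case, writing $t_2^{p-1}\tilde m(t_2)=\inf_{0<v\le t_2}(t_2/v)^{p-1}\psi'(v)$ and splitting the infimum at $v=t_1$ (using $(t_2/v)^{p-1}\ge (t_1/v)^{p-1}$ for $v\le t_1$, and $(t_2/v)^{p-1}\ge 1$, $\psi'(v)\ge\psi'(t_1)$ for $t_1<v\le t_2$) indeed shows $\varphi'$ is nondecreasing, so $\varphi$ is a genuine Orlicz function; and the ``routine passage'' from equivalence near $0$ to the inequality on all of $(0,1]$ goes through by splitting according to whether $t$ and $st$ fall below the equivalence threshold $t_0$ and using $\psi(t_0)\le\psi(t)\le\psi(1)$ on $[t_0,1]$. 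What the paper's citation buys is brevity; what your argument buys is a proof that does not send the reader to two external sources, at the cost of roughly a page of regularization details. The only loose end, which is harmless, is the edge case $p=1$ in the concave half of (ii): there both sides of the equivalence are vacuously false, since $\beta_\psi^0\ge 1$ and no Orlicz function can be equivalent near zero to a $q$-concave one with $q<1$ (this would contradict $\psi(st)\le s\psi(t)$).
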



\begin{lemma}
\label{lemma1}
Let $M$ and $\psi$ be Orlicz functions such that $\psi \in C_{M}^{\infty}$. Then, $\alpha_M^\infty\le\alpha_\psi^0 \le \beta_\psi^0\le\beta_M^\infty$.
\end{lemma}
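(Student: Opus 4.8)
The plan is to reduce the four-term chain to its two outer inequalities, since $\alpha_\psi^0\le\beta_\psi^0$ holds for every Orlicz function. For the outer inequalities I will translate each Matuszewska--Orlicz index into an equivalent submultiplicative growth estimate and show that the estimates governing $M$ at infinity transfer, upon passing to the limit defining $E_M^\infty$, into the corresponding estimates governing $\psi$ near zero. Concretely, fix $p<\alpha_M^\infty$; then by definition there is $C>0$ with $M(ts)\ge C^{-1}s^pM(t)$ for all $t,s\ge 1$. Symmetrically, for $p>\beta_M^\infty$ there is $c>0$ with $M(ts)\le c^{-1}s^pM(t)$ for all $t,s\ge 1$.

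Next I handle a single $N\in E_M^\infty$. Since $N\in E_{M,A}^\infty$ for every $A>0$, a diagonal choice produces a sequence $y_n\to\infty$ with $M(\cdot\, y_n)/M(y_n)\to N$ uniformly on $[0,1]$, hence pointwise; thus $N(\sigma\tau)/N(\tau)=\lim_n M(\sigma\tau y_n)/M(\tau y_n)$ for any fixed $0<\sigma,\tau\le 1$. Applying the infinity estimate with $t=\sigma\tau y_n\ge 1$ and $s=1/\sigma\ge 1$ (valid for all large $n$, since $\sigma\tau y_n\to\infty$) gives, for $p<\alpha_M^\infty$, $M(\tau y_n)\ge C^{-1}\sigma^{-p}M(\sigma\tau y_n)$, whence $N(\sigma\tau)\le C\sigma^p N(\tau)$ for all $0<\sigma,\tau\le 1$. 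The same substitution applied to the opposite estimate yields, for $p>\beta_M^\infty$, $N(\sigma\tau)\ge c\sigma^p N(\tau)$ on $(0,1]^2$. By the definitions of the indices at zero (equivalently, by Lemma \ref{Lemma 20}(i), which identifies these inequalities with $p$-convexity and $p$-concavity of $N$ near zero) these two estimates say precisely that $\alpha_N^0\ge p$ and $\beta_N^0\le p$, respectively.

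Finally I pass to $C_M^\infty=\overline{\mathrm{conv}\,E_M^\infty}$. The decisive point is that the constants $C$ and $c$ above depend only on $M$ and $p$, and are therefore uniform over all $N\in E_M^\infty$. Consequently, if $\varphi=\sum_i\lambda_i N_i$ is a finite convex combination of elements of $E_M^\infty$, summing the estimates gives $\varphi(\sigma\tau)\le C\sigma^p\varphi(\tau)$ (resp. $\varphi(\sigma\tau)\ge c\sigma^p\varphi(\tau)$) on $(0,1]^2$. Since convergence in $C[0,1]$ entails pointwise convergence, these inequalities survive in the limit, so every $\psi\in C_M^\infty$ satisfies $\psi(\sigma\tau)\le C\sigma^p\psi(\tau)$ for each $p<\alpha_M^\infty$ and $\psi(\sigma\tau)\ge c\sigma^p\psi(\tau)$ for each $p>\beta_M^\infty$, with $0<\sigma,\tau\le 1$. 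Letting $p\uparrow\alpha_M^\infty$ and $p\downarrow\beta_M^\infty$ then yields $\alpha_\psi^0\ge\alpha_M^\infty$ and $\beta_\psi^0\le\beta_M^\infty$, which together with the trivial middle inequality completes the proof.

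I expect the main obstacle to be the bookkeeping in the change of variables: one must map the infinity regime $t,s\ge 1$ onto the zero regime $0<\sigma,\tau\le 1$ through the reciprocal substitution $s=1/\sigma$ while tracking the direction of each inequality, and one must verify that the constants are genuinely independent of the chosen $N\in E_M^\infty$ so that the convex-hull step does not degrade them. Everything else reduces to routine limiting arguments.
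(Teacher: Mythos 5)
Your proof is correct and takes essentially the same route as the paper's: represent each element of $E_M^\infty$ as a uniform limit of ratios $M(\cdot\,t_n)/M(t_n)$, transfer the growth estimate at infinity to an estimate near zero via the reciprocal substitution $s=1/\sigma$ (with the argument $\sigma\tau t_n\ge 1$ for large $n$), and observe that the constant is uniform over $E_M^\infty$, hence survives finite convex combinations and uniform limits, giving the estimate for every $\psi\in C_M^\infty$. The only cosmetic differences are that you work with arbitrary $p<\alpha_M^\infty$ and $p>\beta_M^\infty$ where the paper fixes $p=\beta_M^\infty$ and uses $p+\varepsilon$, and that you write out both outer inequalities while the paper proves one and declares the other symmetric.
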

\begin{proof}
We show only that $\beta_\psi^0\le\beta_M^\infty$ because the proof of the inequality $\alpha_M^\infty\le \alpha_\psi^0$ is quite similar.

Assume first that $L\in E_M^\infty$, i.e.,
\begin{equation} 
 \label{eq1}
L(s)=\lim_{n\to\infty}\frac{M(st_n)}{M(t_n)},\;\;0<s\le 1, 
\end{equation}
where $t_n\uparrow \infty$ as $n\to\infty$.

Since 
$$
\beta_M^\infty=\inf\Big\{p>0:\,\inf_{x,y\ge 1}\frac{y^pM(x)}{M(xy)}>0\Big\},$$
then for any $\varepsilon>0$ there is $h_\varepsilon>0$ such that for $p:= \beta_M^\infty$ and all $x,y\ge 1$
\begin{equation} 
 \label{eq2}
\frac{M(x)y^{p+\varepsilon}}{M(xy)}>h_\varepsilon. 
\end{equation}

By  \eqref{eq1}, for all $0<u,v\le 1$ and $q>0$
\begin{equation*}
\frac{L(uv)}{v^{q}L(u)}=\frac{1}{v^{q}}\frac{\lim_{n\to\infty}\frac{M(uvt_n)}{M(t_n)}}{\lim_{n\to\infty}\frac{M(ut_n)}{M(t_n)}}=\lim_{n\to\infty}\frac{M(uvt_n)}{M(ut_n)v^q}=
\lim_{n\to\infty}\frac{M(uvt_n)v^{-q}}{M(v^{-1}uvt_n)}.
\end{equation*}
Observe that $v^{-1}\ge 1$ and $uvt_n\ge 1$ if $n\in\mathbb{N}$ is large enough. Therefore,  from \eqref{eq2} and the last inequality it follows that for $q=p+\varepsilon$ and all $0<u,v\le 1$ 
$$
\frac{L(uv)}{v^{p+\varepsilon}L(u)}>h_\varepsilon.$$

Now, if $N\in {\rm conv}\,E_{M}^{\infty}$, then $N=\sum_{k=1}^m\lambda_kL_k$, $L_k\in E_M^\infty$, $\lambda_k>0$, $k=1,2,\dots,m$, $\sum_{k=1}^m\lambda_k=1$. Hence, as above, for all $k=1,2,\dots,m$ and $0<u,v\le 1$ we have
$$
\frac{L_k(uv)}{v^{p+\varepsilon}L_k(u)}>h_\varepsilon,$$ 
which implies that 
$$
\frac{N(uv)}{v^{p+\varepsilon}N(u)}>h_\varepsilon.$$

Finally,  let $\psi\in C_M^\infty$. Then, for some sequence $\{N_i\}\subset {\rm conv}\,E_{M}^{\infty}$ we have
$$
\lim_{i\to\infty}\sup_{0<s\le 1}|\psi(s)-N_i(s)|=0.$$
Since $h_\varepsilon$ depends only on $\varepsilon$, as above, 
$$
\frac{N_i(uv)}{v^{p+\varepsilon}N_i(u)}>h_\varepsilon$$
for all $i=1,2,\dots$ and $0<u,v\le 1$. Passing to the limit as $i\to\infty$, we get 
$$
\inf_{0<u,v\le 1}\frac{\psi(uv)}{v^{p+\varepsilon}\psi(u)}\ge h_\varepsilon.$$
Hence,
$$
\beta_\psi^0:=\inf\Big\{r>0:\,\inf_{0<u,v\le 1}\frac{\psi(uv)}{v^r\psi(u)}>0\Big\}\le p+\varepsilon=\beta_M^\infty+\varepsilon.$$
Since $\varepsilon>0$ is arbitrary, the inequality $\beta_\psi^0\le\beta_M^\infty$ is proved.
\end{proof}


\begin{prop}
\label{prop1}
Let $M$ be an Orlicz function such that $1<\alpha_M^\infty\le \beta_M^\infty<2$. Assume also that at least one of the following conditions holds:

(a) for some $K_1>0$
\begin{equation} 
 \label{eq0}
\limsup_{v\to\infty}\frac{M(uv)}{M(v)}\le K_1M(u)\log(eu),\;\;u\ge 1, 
\end{equation}

(b) $t^{-1/\beta_M^\infty}\not\in L_M$  and for some $K_2>0$
\begin{equation} 
 \label{eq00}
\limsup_{v\to\infty}\frac{M(uv)}{M(v)}\le K_2u^{\beta_M^\infty},\;\;u\ge 1.
\end{equation}
Then, $1/\psi^{-1}\not\in L_M$ for every $\psi\in C_{M}^{\infty}$.

Moreover, if a strongly embedded subspace $H$ of $L_M$ is isomorphic to an Orlicz sequence space $\ell_\psi$, then $\alpha_\psi^0>\beta_M^\infty$ and hence $\psi\not\in C_{M}^{\infty}$.
\end{prop}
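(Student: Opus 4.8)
The plan is to reduce everything to a single integral reformulation of the condition $1/\phi^{-1}\in L_M$, and then to extract both assertions from pointwise lower bounds on the elements of $E_M^\infty$ that are forced by (a) and (b). First the reformulation. Since $\beta_M^\infty<\infty$ gives $M\in\Delta_2^\infty$, a measurable $g$ on $[0,1]$ lies in $L_M$ iff $\int_0^1 M(|g|)\,dt<\infty$. Taking $g=1/\phi^{-1}$, whose nonincreasing rearrangement is itself and whose distribution is $n_g(\tau)=\min\{1,\phi(1/\tau)\}$, the layer-cake formula together with the elementary equivalence $M'(\tau)\asymp M(\tau)/\tau$ for $\tau\ge1$ (immediate from convexity and $\Delta_2^\infty$) yields
\begin{equation*}
1/\phi^{-1}\in L_M\quad\Longleftrightarrow\quad\int_0^1 M(1/u)\,\phi(u)\,\frac{du}{u}<\infty .
\end{equation*}
This identity is the workhorse for both parts.

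For the first assertion I would take $L\in E_M^\infty$, say $L(s)=\lim_n M(st_n)/M(t_n)$. Writing $s=1/w$ with $w\ge1$ and putting $v_n=t_n/w\to\infty$ gives $1/L(1/w)=\lim_n M(v_nw)/M(v_n)\le\limsup_{v\to\infty}M(vw)/M(v)$. Hence (a) forces $M(1/u)\,L(u)\ge (K_1\log(e/u))^{-1}$ and (b) forces $L(u)\ge K_2^{-1}u^{\beta_M^\infty}$, for $0<u\le1$. Each bound is linear in $L$, so it is preserved under convex combinations and then under uniform limits, and therefore holds with $L$ replaced by any $\psi\in C_M^\infty$. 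Under (a) the workhorse integral then dominates $\int_0^1(K_1\log(e/u))^{-1}\,\frac{du}{u}=\infty$; under (b), $\psi(u)\ge K_2^{-1}u^{\beta_M^\infty}$ gives $1/\psi^{-1}(t)\ge c\,t^{-1/\beta_M^\infty}$, and since $t^{-1/\beta_M^\infty}\notin L_M$ and $L_M$ is an ideal, $1/\psi^{-1}\notin L_M$. This proves $1/\psi^{-1}\notin L_M$ for all $\psi\in C_M^\infty$.

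For the second assertion write $\beta:=\beta_M^\infty$. In either case $t^{-1/\beta}\notin L_M$: under (b) by hypothesis, under (a) because $t^\beta\in C_M^\infty$ (as $\beta\in[\alpha_M^\infty,\beta_M^\infty]$) so the first assertion applies; consequently $t^{-1/p}\notin L_M$ for all $p\le\beta$ by monotonicity on $(0,1]$. Now let $H\cong\ell_\psi$ be strongly embedded in $L_M$ (so $\psi\in\Delta_2^0$, as $H$ is separable). By the result of \cite{A-16} quoted in the introduction, $1/\psi^{-1}\in L_M$. If $\beta_\psi^0<\beta$, pick $p\in(\beta_\psi^0,\beta)$; by the definition of $\beta_\psi^0$ one has $\psi(u)\ge c\,u^p$ near $0$, so $1/\psi^{-1}(t)\ge c'\,t^{-1/p}\notin L_M$, a contradiction; hence $\beta_\psi^0\ge\beta$. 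Moreover $\ell^\beta$ does not embed isomorphically into $H$: such a copy of $\ell^\beta=\ell_{t^\beta}$ would itself be a strongly embedded subspace of $L_M$, forcing $t^{-1/\beta}\in L_M$ by \cite{A-16}, which is false. By \cite[Theorem~4.a.9]{LT77} this gives $\beta\notin[\alpha_\psi^0,\beta_\psi^0]$, and together with $\beta_\psi^0\ge\beta$ (and $\alpha_\psi^0\le\beta_\psi^0$) it yields $\alpha_\psi^0>\beta=\beta_M^\infty$. Finally Lemma \ref{lemma1} gives $\psi\notin C_M^\infty$, since $\psi\in C_M^\infty$ would force $\alpha_\psi^0\le\beta_M^\infty$.

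I expect the main obstacle to be the passage from (a), (b) to the lower bounds on $E_M^\infty$ via the reciprocal substitution $s=1/w$, $v_n=t_n/w$, and — in case (a) — the recognition that the resulting weight $(K_1\log(e/u))^{-1}$ sits exactly at the borderline of non-integrability against $du/u$, which is what makes the logarithmic room in (a) sharp. In the second assertion the delicate point is that $1/\psi^{-1}\in L_M$ alone is too weak to yield the strict inequality (it is compatible with $\alpha_\psi^0=\beta$, e.g.\ for $\psi(u)\asymp u^\beta/\log^2(1/u)$); strictness is obtained only by squeezing the lower bound $\beta_\psi^0\ge\beta$ against the exclusion $\beta\notin[\alpha_\psi^0,\beta_\psi^0]$, the latter being precisely the $\ell^\beta$-obstruction supplied by strong embedding and \cite{A-16}.
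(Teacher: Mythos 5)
Your proof of the first assertion is correct and follows the paper's own route: the reciprocal substitution giving the pointwise bounds $M(1/u)L(u)\ge (K_1\log(e/u))^{-1}$ in case (a) and $L(u)\ge K_2^{-1}u^{\beta_M^\infty}$ in case (b) for $L\in E_M^\infty$, their preservation under convex combinations and uniform limits, and then divergence of the logarithmic integral, resp.\ comparison with $t^{-1/\beta_M^\infty}$ via the ideal property. Your ``workhorse'' identity is only a repackaging of the paper's change of variables $t=\psi(s)$ combined with $\psi(t)\le t$; both reduce case (a) to $\int_0^1\frac{dt}{t\log(e/t)}=\infty$.

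The second assertion has a genuine gap at the step ``By the result of \cite{A-16} quoted in the introduction, $1/\psi^{-1}\in L_M$.'' The introduction's sentence is an informal summary; the result actually invoked, \cite[Corollary~3.3]{A-16}, carries hypotheses on $\psi$: as the paper's own uses of it show (implication (ii)$\Rightarrow$(i) in Theorem \ref{theorem-main}, and the paper's proof of this very proposition), it requires $\psi$ to be equivalent for small arguments to a $(1+\varepsilon)$-convex and $(2-\varepsilon)$-concave Orlicz function, i.e.\ $1<\alpha_\psi^0\le\beta_\psi^0<2$ in the language of Lemma \ref{Lemma 20}. For your $\psi$ nothing of the kind has been verified: reflexivity of $L_M$ gives $\alpha_\psi^0>1$ (a point you never address), and embeddability into $L^1$ gives $\beta_\psi^0\le 2$, but the borderline case $\beta_\psi^0=2$ is not excluded, and there the cited corollary does not apply as stated. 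This is precisely why the paper never applies \cite{A-16} to $\psi$ itself: it passes to $\ell^{\alpha_\psi^0}\subset\ell_\psi$, notes the claim is trivial when $\alpha_\psi^0\ge 2$, and applies Corollary~3.3 only to the power function $t^{\alpha_\psi^0}$ with $1<\alpha_\psi^0<2$, obtaining $t^{-1/\alpha_\psi^0}\in L_M$; comparing with $t^{-1/\beta_M^\infty}\notin L_M$ then yields $\alpha_\psi^0>\beta_M^\infty$ at once, with no need for your $\ell^{\beta_M^\infty}$-obstruction step or \cite[Theorem~4.a.9]{LT77}.

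The gap is repairable inside your own scheme: in the step where you assume $\beta_\psi^0<\beta_M^\infty$ for contradiction, observe that then $1<\beta_\psi^0<2$, that $\ell^{\beta_\psi^0}$ embeds into $\ell_\psi\cong H$ and is therefore strongly embedded in $L_M$, and apply Corollary~3.3 to the power function $t^{\beta_\psi^0}$ (now a legitimate application) to get $t^{-1/\beta_\psi^0}\in L_M$; since $t^{-1/\beta_\psi^0}\ge t^{-1/\beta_M^\infty}$ on $(0,1]$, this contradicts $t^{-1/\beta_M^\infty}\notin L_M$. With that replacement, your remaining steps --- the exclusion of $\ell^{\beta_M^\infty}$ from $H$ (which applies \cite{A-16} only to $t^{\beta_M^\infty}$, legitimately), the conclusion $\beta_M^\infty\notin[\alpha_\psi^0,\beta_\psi^0]$, hence $\alpha_\psi^0>\beta_M^\infty$, and the appeal to Lemma \ref{lemma1} --- are sound, though the resulting argument is longer than the paper's.
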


\begin{proof}
Suppose first that the condition (a) is satisfied and $\psi\in E_M^\infty$, i.e.,
\begin{equation} 
 \label{repr for E}
\psi(s)=\lim_{n\to\infty}\frac{M(st_n)}{M(t_n)},\;\;0<s\le 1,\end{equation}
for some sequence $t_n\ge 1$, $t_n\uparrow \infty$. Then, for each fixed $s\in (0,1]$ from  \eqref{eq0} it follows that 
$$
\frac{1}{\psi(s)}=\lim_{n\to\infty}\frac{M(s^{-1}st_n)}{M(st_n)}\le \limsup_{v\to\infty}\frac{M(s^{-1}v)}{M(v)}\le K_1M(1/s)\log(e/s).$$
Hence,
$$
M(1/s)\psi(s)\ge \frac1{K_1\log(e/s)},\;\;0<s\le 1.$$
Taking into account the definition of the set $C_M^\infty$, we infer that this inequality holds 
for every $\psi\in C_M^\infty$ and all $0<s\le 1$. Changing variables and using the inequality $\psi(t)\le t$, $0<t<1$, we get 
$$
M\Big(\frac{1}{\psi^{-1}(t)}\Big)\ge \frac{1}{K_1t\log(\frac{e}{\psi^{-1}(t)})}\ge \frac{1}{K_1t\log({e}/{t})},\;\;0<t<\psi(1)\le 1,$$
whence
$$
\int_0^1M\Big(\frac{1}{\psi^{-1}(t)}\Big)\,dt=\infty.$$
Since $M\in\Delta_2^\infty$, we conclude that $1/\psi^{-1}\not\in L_M$. 

Suppose now that the condition (b) holds. Then, if $\psi\in E_{M}^{\infty}$ is defined by \eqref{repr for E}, then from inequality \eqref{eq00} it follows
$$
\psi(s)=\lim_{n\to\infty}\frac{M(v_n)}{M(v_n/s)}\ge \left(\limsup_{v\to\infty}\frac{M(v/s)}{M(v)}\right)^{-1}\ge K_2^{-1}s^{\beta_M^\infty},\;\;0<s\le 1.$$
Clearly, this inequality is fulfilled for every $\psi\in C_{M}^{\infty}$ as well.
Hence, $1/\psi^{-1}(s)\ge \frac{1}{K_2} s^{-1/\beta_M^\infty}$, $0<s\le 1$. Since according to (b) $s^{-1/\beta_M^\infty}\not\in L_M$, we conclude again that $1/\psi^{-1}\not\in L_M$ for every $\psi\in C_{M}^{\infty}$. Thus, the first assertion of the proposition is proved.

To prove the second one, assume that $H$ is a strongly embedded subspace of $L_M$ that is isomorphic to some Orlicz sequence space $\ell_\psi$. Then, by \cite[Theorem~4.a.8]{LT77} (see also \S\,\ref{prel2}), $\ell_{\alpha_\psi^0}$ is a subspace of $\ell_\psi$, and hence $L_M$ contains a strongly embedded subspace isomorphic to $\ell_{\alpha_\psi^0}$. Since by condition the space $L_M$ is reflexive, we have $\alpha_\psi^0>1$. Moreover, $\beta_M^\infty<2$, and hence the required assertion is obvious if $\alpha_\psi^0\ge 2$. Thus, we can assume that   $1<\alpha_\psi^0<2$.
Therefore, since $L_M$ is separable and has the Fatou property, we can apply \cite[Corollary~3.3]{A-16} to deduce that $t^{-1/\alpha_\psi^0}\in L_M$. On the other hand, observe that $t^{\beta_M^\infty}\in C_{M}^{\infty}$ (see Section~\ref{prel2}). Consequently, by the first (already proved) assertion of the proposition, $t^{-1/\beta_M^\infty}\not\in L_M$. Comparing the last relations, we conclude that $\alpha_\psi^0>\beta_M^\infty$.
The fact that $\psi\not\in C_{M}^{\infty}$ follows now from Lemma \ref{lemma1}, and so the proof of the proposition is completed.

\end{proof}

\subsection{A weak version of de la Vall\'{e}e Poussin criterion.\\}
\label{aux3}


Do not finding a precise reference, we give, for the convenience of the reader, an independent proof of the following weak version of the well-known de la Vall\'{e}e Poussin criterion (see e.g. \cite{Al-94}, \cite{CFMN}, \cite{LMT}).

\begin{lemma}\label{lemma 3}
Let $M$ be an Orlicz function such that $M\in \Delta_2^\infty$ and $\tilde{M}\in \Delta_2^\infty$. Then, for any $f\in L_M$ there exists a function $N,$ equivalent to some Orlicz function and satisfying the following conditions: $N(1)=1,$ $N\in \Delta_2^\infty$, $\tilde{N}\in \Delta_2^\infty$, 
\begin{equation}\label{eq10}
\lim_{u\to\infty}\frac{N(u)}{M(u)}=\infty
\end{equation}
and
\begin{equation}\label{eq11}
\int_0^1N(|f(t)|)\,dt<\infty.
\end{equation}
Moreover, if additionally $M$ is $p$-convex for some $p<\infty$, then, along with the previous properties, $N$ is equivalent to a $p$-convex Orlicz function.
\end{lemma}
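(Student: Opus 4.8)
The plan is to reduce the whole statement to one construction: an increasing weight $W$ that tends to infinity but grows at most polynomially (so that it is $\Delta_2^\infty$) and is summable against the distribution of $g:=M(|f|)$. Since $\|f\|_{L_M}<\infty$, there is $\lambda\ge 1$ with $\int_0^1 M(|f|/\lambda)\,dt\le 1$, and because $M\in\Delta_2^\infty$ we have $M(|f|)\le C_\lambda M(|f|/\lambda)+M(\lambda)$; hence $g=M(|f|)\in L^1[0,1]$. Put $\mu_j:=m\{t:2^j\le g(t)<2^{j+1}\}$ and $D_j:=2^j\mu_j$ for $j\ge 0$; then $\sum_{j\ge 0}D_j\le \int_{\{g\ge 1\}}g\,dt<\infty$. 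Once $W$ is found I simply set $N(u):=M(u)\,W(M(u))$, so that $\int_0^1 N(|f|)\,dt=\int_0^1 g\,W(g)\,dt$ and $N(u)/M(u)=W(M(u))$.

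The heart of the argument, and the only delicate point, is to produce reals $\gamma_j$ with $\gamma_j\uparrow\infty$, controlled doubling $\gamma_j\le\gamma_{j+1}\le 2\gamma_j$, and $\sum_j\gamma_j D_j<\infty$. I would define them recursively by $\gamma_0:=1$ and $\gamma_{j+1}:=\min\{2\gamma_j,\,E_{j+1}^{-1/2}\}$, where $E_j:=\sum_{i\ge j}D_i\downarrow 0$. The cap $2\gamma_j$ enforces the doubling bound, while the inequality $\gamma_j\le E_j^{-1/2}$ (valid for $j\ge 1$ and propagating since $E_{j+1}\le E_j$) gives, via the elementary estimate $E_j^{-1/2}(E_j-E_{j+1})\le 2(E_j^{1/2}-E_{j+1}^{1/2})$ and telescoping, $\sum_{j\ge 1}\gamma_j D_j\le 2E_1^{1/2}<\infty$; the single term $\gamma_0D_0$ is harmless. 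Since $E_j\to 0$ forces $E_j^{-1/2}\to\infty$, the cap is eventually the binding constraint, so $\gamma_j\uparrow\infty$. It is exactly here that the cap is essential: the uncapped choice $\gamma_j=E_j^{-1/2}$ can grow faster than any geometric rate when $g$ has light tails, which would wreck the $\Delta_2^\infty$-control we need on $N$.

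Let $W$ be the nondecreasing step function equal to $\gamma_j$ on $[2^j,2^{j+1})$ (and to $\gamma_0$ on $[0,1)$); then $W(s)\to\infty$ and $W(2s)\le 2W(s)$, so $\beta_W^\infty\le 1<\infty$. With $N(u)=M(u)W(M(u))$ one has $\int_0^1 N(|f|)\,dt\le \gamma_0\int_{\{g<1\}}g\,dt+2\sum_{j\ge 0}\gamma_j D_j<\infty$, which is \eqref{eq11}, and $N(u)/M(u)=W(M(u))\to\infty$, which is \eqref{eq10}. For the regularity conditions I use that $W\circ M$ is nondecreasing: for $t\ge 1$ one has $\frac{N(u)\,t^{p}}{N(ut)}\le\frac{M(u)\,t^{p}}{M(ut)}$, whence $\alpha_N^\infty\ge\alpha_M^\infty>1$, i.e. $\tilde N\in\Delta_2^\infty$; and $N(2u)=M(2u)W(M(2u))\le M(2u)\,W(KM(u))\le C\,M(2u)W(M(u))\le C'N(u)$ for $u\ge 1$ (using $M\in\Delta_2^\infty$ and $\beta_W^\infty<\infty$), so $N\in\Delta_2^\infty$.

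It remains to replace $N$ by an equivalent Orlicz function with $N(1)=1$ and to treat $p$-convexity. Here I invoke the convexification already used in \S\ref{aux1}: since $M$ is convex, $M(u)/u$ is nondecreasing, and $W(M(u))$ is nondecreasing, so $N(u)/u=(M(u)/u)\,W(M(u))$ is nondecreasing; hence $N$ is equivalent on $(0,\infty)$ to the convex Orlicz function $u\mapsto\int_0^u \frac{N(s)}{s}\,ds$, and dividing by the constant $N(1)=W(1)$ normalizes its value at $1$ to be $1$. Passing to an equivalent function changes neither the indices nor the $\Delta_2^\infty$-behaviour, so all the listed properties persist. Finally, if $M$ is $p$-convex, write $M(u)=\Xi(u^p)$ with $\Xi$ a convex Orlicz function; then $N(u)=\Theta(u^p)$ with $\Theta(v)=\Xi(v)W(\Xi(v))$, and since $\Theta(v)/v=(\Xi(v)/v)W(\Xi(v))$ is again nondecreasing, $\Theta$ is equivalent to a convex Orlicz function, whence $N$ is equivalent to a $p$-convex one, as required.
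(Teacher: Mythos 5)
Your construction is correct in substance and follows the same overall strategy as the paper's proof: both take $N(u)=M(u)\,W(M(u))$, where $W$ is an unbounded, nondecreasing weight with doubling control that is summable against the distribution of $g=M(|f|)$, and then read off $N\in\Delta_2^\infty$, $\tilde N\in\Delta_2^\infty$, \eqref{eq10}, \eqref{eq11} and the $p$-convexity claim from monotonicity and doubling of $W$. Where you differ is in how the weight is manufactured and how two properties are verified. The paper uses integer level sets $E_n(g)=\{n-1<g\le n\}$, greedily selects a sparse sequence $n_{k+1}\ge 2n_k$ along which the tails $\sum_{n\ge n_k} n\, m(E_n(g))$ fall below $2^{-k}$, and lets the weight equal $k$ on the $k$-th block (piecewise linearly interpolated), so that summability is just $\sum_k k2^{-k}<\infty$; you instead fix dyadic blocks and adapt the weight to the tails via the capped recursion $\gamma_{j+1}=\min\{2\gamma_j,E_{j+1}^{-1/2}\}$, getting summability from the telescoping square-root estimate. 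Both mechanisms are standard and equally effective. Likewise, for $\tilde N\in\Delta_2^\infty$ you pass through Matuszewska indices ($\alpha_N^\infty\ge\alpha_M^\infty>1$), whereas the paper invokes the criterion of \cite[Theorem~I.4.2]{KR} ($\tilde Q\in\Delta_2^\infty$ iff $Q(lu)\ge 2lQ(u)$ for large $u$); for $p$-convexity you substitute $M(u)=\Xi(u^p)$ and convexify, whereas the paper checks $N(su)\le s^p N(u)$ for $0<s<1$ and cites \cite[Lemma~20]{MSS}. These routes are interchangeable.

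One detail needs patching. Monotonicity of $W$, which you use repeatedly (for $W\circ M$ nondecreasing in the index estimate, for $N(u)/u$ nondecreasing in the convexification, and for $\Theta(v)/v$ nondecreasing), requires $\gamma_0\le\gamma_1$, i.e. $E_1\le 1$; you never arranged this, since you only chose $\lambda\ge 1$ with $\int_0^1 M(|f|/\lambda)\,dt\le 1$ and allowed $\|g\|_1$ to be large, in which case $\gamma_1=E_1^{-1/2}<1=\gamma_0$ and $W$ dips at the first block boundary. The remedy is the paper's very first step: assume without loss of generality $\|f\|_{L_M}\le 1$, so that $\|g\|_1\le 1$, hence $E_1\le E_0\le\|g\|_1\le 1$ and the recursion is monotone from $j=0$ on (the general case is then recovered at the end from the established property $N\in\Delta_2^\infty$, which gives $\int_0^1 N(|f|)\,dt\le C_\lambda\int_0^1 N(|f|/\lambda)\,dt+N(\lambda)<\infty$). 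This is a one-line repair, not a gap.
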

\begin{proof}
Without loss of generality, we assume that $\|f\|_{L_M}\le 1$. Since $M\in \Delta_2^\infty$, this implies that $\|g\|_{1}\le 1$, where $g:=M(|f|)$. Then, denoting $E_n(g):=\{t\in [0,1]:\,n-1<g(t)\le n\}$, $n=1,2,\dots$, we get 
\begin{equation}\label{eq11a}
\|g\|_1\asymp \int_{E_1(g)} g(t)\,dt+\sum_{n=2}^\infty n\cdot m(E_n(g))<\infty.
\end{equation}
Consequently, one can select inductively an increasing sequence of positive integers $\{n_k\}_{k=1}^\infty$ such that $n_{k+1}\ge 2n_k$, $k=1,2,\dots$, and
$$
\sum_{n=n_k}^\infty n\cdot m(E_n(g))<2^{-k},\;\;k=1,2,\dots$$

Let $a_n=1$ for $n=0,1,\dots,n_1,$ and $a_n=k$ for $n_k<n\le n_{k+1},$ $k=1,2,\dots$. Also, we define the function $R:\,[0,\infty)\to\mathbb{R}$ as follows: $R(n)=a_n,$ $n=0,1,2,\dots,$ and
$R$ is linear on each interval $[n,n+1]$, $n=0,1,\dots$. Then, $R$ is continuous, $R(u)\ge 1$ and $R(u)\uparrow\infty$ as $u\to\infty.$
Furthermore, from \eqref{eq11a} it follows
\begin{eqnarray*}
\int_0^1 g(t)R(g(t))\,dt &=& \sum_{n=1}^\infty \int_{E_n(g)} g(t)R(g(t))\,dt\le
\int_{E_1(g)} g(t)\,dt+\sum_{n=2}^\infty na_n\cdot m(E_n(g))\\
&\le& \int_{E_1(g)} g(t)\,dt+\sum_{n=2}^{n_1} n\cdot m(E_n(g))+
\sum_{k=1}^\infty k \sum_{n=n_k+1}^{n_{k+1}} n\cdot m(E_n(g))\\
&\le& C+\sum_{k=1}^\infty k2^{-k}<\infty.
\end{eqnarray*}

Let $N(u):=M(u)R(M(u))$, $u\ge 0$. Since $N$ and $N(u)/u$ increase, $N(0)=0$, then, as above, $N$ is equivalent to some Orlicz function. Also, the last estimate implies that the function $N$ satisfies conditions \eqref{eq10} and \eqref{eq11}. Moreover, taking into account that $n_{k+1}\ge 2n_k$, $k=1,2,\dots$, we get
$$
N(2u)=M(2u)R(M(2u))\le KM(u)R(KM(u))\le K^2M(u)R(M(u))=K^2N(u),\;\;u\ge 1,$$
where $K$ is the $\Delta_2^\infty$-constant of $M$. Thus, $N\in\Delta_2^\infty$.

Next, we recall that the Young conjugate function $\tilde{Q}$ for an Orlicz function $Q$ satisfies the $\Delta_2^\infty$-condition if and only if there exist $l>1$ and $u_0>0$ such that $Q(lu)\ge 2lQ(u)$ for all $u\ge u_0$ \cite[Theorem~I.4.2]{KR}. In particular, since $\tilde{M}\in \Delta_2^\infty$, we have $M(lu)\ge 2lM(u)$, $u\ge u_0$,  for some $l>1$ and $u_0>0$. Therefore, if $u\ge u_0$, we have
$$
N(lu)=M(lu)R(M(lu))\ge 2lM(u)R(2lM(u))\ge 2lM(u)R(M(u))=2lN(u),$$
whence $\tilde{N}\in\Delta_2^\infty$.

At last, we assume that the function $M$ is $p$-convex for some $p<\infty$. Then, for all $0<s<1$ and $u>0$
$$
N(su)=M(su)R(M(su))\le s^pM(u)R(s^pM(u))\le s^pM(u)R(M(u))=s^pN(u),$$
which implies that $N$ is equivalent to some $p$-convex function  \cite[Lemma~20]{MSS}.
\end{proof}

\section{Main results}

We start with the following key result showing a connection between the existence of a strongly embedded subspace of an Orlicz space $L_M$ with the unit ball having non-equi-absolutely continuous norms in $L_M$ and the structure of the set $C_M^\infty$.

\begin{prop}
\label{prop-main}
Suppose $M$ is an Orlicz function, $1<\alpha_M^\infty\le \beta_M^\infty<2$. Assume that there exists a strongly embedded subspace $H$ of the Orlicz space $L_M$ such that the unit ball $B_H$ of $H$ fails to have equi-absolutely continuous norms in $L_M$.

Then, there is a function $\psi\in C_M^\infty$ such that $1/\psi^{-1} \in L_M$.
\end{prop}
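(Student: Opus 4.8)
The plan is to read the failure of equi-absolute continuity as a concentration phenomenon, to distil from it an Orlicz sequence space $\ell_\psi$ with $\psi\in C_M^\infty$, and then to feed a strongly embedded copy of $\ell_\psi$ into the author's theorem from \cite{A-16}. By Lemma~\ref{lemma 2}, the assumption that the bounded set $B_H$ fails to have equi-absolutely continuous norms is exactly the statement $\eta_{L_M}(H)>0$. Unwinding the definition of $\eta_{L_M}$, I would choose $\delta>0$, numbers $t_n\downarrow 0$, and normalized $x_n\in H$ with $\|x_n^*\chi_{[0,t_n]}\|_{L_M}\ge\delta$; since $1<\alpha_M^\infty\le\beta_M^\infty<2$ forces $M,\tilde M\in\Delta_2^\infty$, the space $L_M$ is separable and reflexive, so after passing to a subsequence I may assume $\{x_n\}$ is a weakly null basic sequence still obeying the concentration bound.

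Next I would invoke the Figiel--Johnson--Tzafriri refinement of the Kadec--Pe{\l}czy{\'n}ski alternative (\cite[Theorem~4.1]{FJT}) to split a subsequence as $x_n=y_n+z_n$, where $\{y_n\}$ is a pairwise disjoint (singular) part and $\{z_n\}$ has equi-absolutely continuous norms in $L_M$. The concentration $\|x_n^*\chi_{[0,t_n]}\|_{L_M}\ge\delta$ is carried by the singular part, so $\liminf_n\|y_n\|_{L_M}>0$; on the other hand $\{z_n\}$ cannot tend to zero in norm, since otherwise $\{x_n\}$ would be almost disjoint, contradicting the strong embeddedness of $H$. Normalizing the $y_n$ and applying a compactness/diagonal argument to their distributions---whose profiles are governed by the dilations of $M$ at infinity, i.e.\ by $E_M^\infty$---I may assume they are, up to equivalence, identically distributed, so that $[y_n]$ is isomorphic to an Orlicz sequence space $\ell_\psi$. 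Because the $y_n$ are pairwise disjoint, the Lindenstrauss--Tzafriri description of disjointly supported subspaces (\cite[Proposition~4]{LTIII}, recalled in \S\ref{prel2}) gives $\psi\in C_M^\infty$, which is the required membership.

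It remains to prove $1/\psi^{-1}\in L_M$, and here lies the main obstacle. The clean way to obtain it is to exhibit a strongly embedded subspace of $L_M$ that is isomorphic to $\ell_\psi$ and then apply the result of \cite{A-16} quoted in Section~\ref{Intro}, which converts such a copy into the integrability statement $1/\psi^{-1}\in L_M$. The difficulty is that the natural candidate, $[x_n]\subset H$, need not reproduce the $\ell_\psi$-structure of the singular part: the equi-continuous remainder $\{z_n\}$ does not vanish, and in the regime $\beta_M^\infty<2$ its contribution to $\|\sum_k a_k x_k\|_{L_M}$ may a priori dominate that of the disjoint part, for which $\|\sum_k a_k y_k\|_{L_M}\asymp\|(a_k)\|_{\ell_\psi}$. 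Reconciling the two---keeping the concentration visible while strong embeddedness simultaneously forbids disjointifying any sequence inside $H$ and keeps $\{z_n\}$ alive---is the heart of the matter. I would address it by a more careful selection of the concentrated sequence (choosing it so as to minimize, or stabilize, the equi-continuous part) and by exploiting the full index restriction $1<\alpha_M^\infty\le\beta_M^\infty<2$ to control $\|\sum_k a_k z_k\|_{L_M}$ against the $\ell_\psi$-norm, thereby realizing $\ell_\psi$ as a strongly embedded subspace of $H$ and closing the argument via \cite{A-16}.
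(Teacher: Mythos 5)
Your opening moves track the paper's own: both start from the quantitative failure of equi-absolute continuity, produce a weakly null basic sequence in $H$ that stays concentrated on sets of small measure, and extract from disjointly supported pieces an Orlicz function $\psi\in C_M^\infty$ via \cite[Proposition~3]{LTIII}. But there is a genuine gap exactly where you place ``the heart of the matter'': your route to $1/\psi^{-1}\in L_M$ requires realizing $\ell_\psi$ as a strongly embedded subspace of $H$, so that \cite[Corollary~3.3]{A-16} applies, and you never produce such a subspace --- you only announce that ``a more careful selection'' of the sequence and ``the full index restriction'' should control $\|\sum_k a_k z_k\|_{L_M}$ by the $\ell_\psi$-norm. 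No mechanism is given, and none is supplied by the hypotheses: the two-sided estimate you need (in particular the upper bound of $\|\sum_k a_k x_k\|_{L_M}$ by $\|(a_k)\|_{\ell_\psi}$) is precisely what the equi-integrable component can destroy, there is no selection principle that ``minimizes'' it, and nothing in the statement of Proposition~\ref{prop-main} forces $H$, or any subspace of it, to be isomorphic to an Orlicz sequence space at all. So, as written, your argument establishes the membership $\psi\in C_M^\infty$ but not the integrability assertion, which is the actual content of the proposition.

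The paper's proof avoids this obstacle by never constructing an isomorphic copy of $\ell_\psi$ inside $H$: it only needs a one-sided bound. Since the functions $h_i:=f_i\chi_{F_i}$ are restrictions of the $f_i$ to pairwise disjoint sets, unconditionality together with Khintchine's $L^1$-inequality gives $\big\|\sum_k c_k f_k\big\|_{L_M}\ge c\,\big\|\sum_k c_k h_k\big\|_{L_M}\asymp\|(c_k)\|_{\ell_\psi}$, and strong embeddedness converts the left-hand side into an $L^1$-norm. That $L^1$-norm is then bounded from above rather than identified: the subsequence splitting property \cite[Proposition~3.2]{DSS} reduces matters to a sequence $\{u_n\}$ of equimeasurable functions (the disjoint part dies in $L^1$ by weak nullity, the remainder dies in norm), and the martingale-difference comparison results \cite[Theorem~4.5]{SS}, \cite[Lemma~3.5]{ASS} yield $\big\|\sum_{i=1}^n u_i\big\|_1\le C\big\|\sum_{i=1}^n\oplus\, u_i\big\|_{(L_1+L_2)(0,\infty)}$. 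Testing against the fundamental function $\phi_{\ell_\psi}(n)=1/\psi^{-1}(1/n)$ produces the pointwise inequality \eqref{corr_M_g}, namely $1/\psi^{-1}(t)\le C\left(T_1z(t)+T_2z(t)\right)$ with $z=u_1^*\in L_M$; finally, the Hardy-type operators $T_1,T_2$ are bounded on $L_M$ precisely because $1<\alpha_M^\infty\le\beta_M^\infty<2$ \cite{Mont-94}, whence $1/\psi^{-1}\in L_M$. This disjointification-plus-Hardy-operator machinery is what your sketch is missing; without it (or, alternatively, an actual proof that $H$ contains a strongly embedded copy of $\ell_\psi$, which is a strictly stronger structural claim than anything the hypotheses give), the proposal does not close.
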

\begin{proof}
Since the unit ball $B_H$ of $H$ fails to have equi-absolutely continuous norms in $L_M$, we obtain
\begin{equation}\label{extra import1}
\lim_{\delta\to 0}\sup_{m(E)<\delta}\sup_{f\in B_H}\|f\chi_{E}\|_{L_M}:=4\alpha>0.
\end{equation}
We construct an unconditional bounded basic sequence $\{f_i\}_{i=1}^\infty\subset H$ and a sequence $\{F_i\}_{i=1}^\infty$ of pairwise disjoint subsets of $[0,1]$ which satisfy the estimate  
\begin{equation}
\label{extra import}
 \|f_i\chi_{F_i}\|_{L_M}\ge \alpha,\;\;i=1,2,\dots.
\end{equation}

To achieve this, first we define inductively auxiliary sequences of functions $\{g_n\}_{n=1}^\infty\subset H$, of numbers $\{\eta_n\}_{n=1}^\infty$ and of subsets $\{E_n'\}_{n=1}^\infty$ of $[0,1]$ as follows. Let $\eta_1=1$. In view of \eqref{extra import1}, we can find $E_1'\subset[0,1]$ and $g_1\in H$ such that $m(E_1')<\eta_1/2,$ $\|g_1\|_{L_M}=1,$ and $\|g_1\chi_{E_1'}\|_{L_M}\ge 3 \alpha$. Next, since $L_M$ is separable, if $\eta_2\in (0,\eta_1/2)$ is sufficiently
small, then $\|g_1\chi_{A}\|_{L_M}\le \alpha$ whenever $m(A)\le \eta_2.$ Moreover, again by \eqref{extra import1}, there are a set
$E_2'\subset [0,1],$ with $m(E_2')<\eta_2/2$, and a function $g_2\in H,$ with $\|g_2\|_{L_M}=1,$ satisfying $\|g_2\chi_{E_2'}\|_{L_M}\ge 3 \alpha$. 

Assume now that, for some positive integer $n\ge 2$, the functions $g_i\in H$, the numbers $\eta_i$ and the sets $E_i'$, $i=1,2,\dots, n-1,$  such that  $\|g_i\|_{L_M}=1,$
$\eta_{i}\in (0,\eta_{i-1}/2)$ $(i=2,\dots, n-1),$ $m(E_i')<\eta_i/2$, $\|g_i\chi_{E_i'}\|_{L_M}\ge 3 \alpha$, $i=1,\dots,n-1$, and $\|g_j\chi_{A}\|_{L_M}\le \alpha$ for all $j=1,\dots,i-1$ whenever $m(A)\le \eta_i$, $i=1,\dots,n-1$, are already chosen. 
Then, there exists $\eta_n\in (0,\eta_{n-1}/2)$ satisfying the conditions: $\|g_j\chi_{A}\|_{L_M}\le \alpha$ 
for all $1\le j\le n-1$ and for any set $A$ such that $m(A)\le \eta_n.$ Using hypothesis \eqref{extra import1} once again, we find a function $g_n\in H,$ $\|g_n\|_{L_M}=1,$ and a set $E_n'\subset [0,1],$ $m(E_n')<\eta_n/2$ such that $\|g_n\chi_{E_n'}\|_{L_M}\ge 3 \alpha$. 

Having the sequences $\{g_n\}_{n=1}^\infty$, $\{\eta_n\}_{n=1}^\infty$ and $\{E_n'\}_{n=1}^\infty$ with the above properties, we define the pairwise disjoint sets $E_n:=E_n'\setminus \cup_{i=n+1}^\infty E_i',$ $n=1,2,\dots.$
Since
$$
m(\bigcup_{i=n+1}^\infty E_i')\le \sum_{i=n+1}^\infty m(E_i')\le \frac12\sum_{i=n+1}^\infty \eta_i\le
\frac12\sum_{i=1}^\infty\frac{\eta_{n+1}}{2^{i-1}}=\eta_{n+1},$$
in view of the choice of $\eta_{n+1}$, we have
$$
\|g_n\chi_{E_n}\|_{L_M}\ge \|g_n\chi_{E_n'}\|_{L_M}-\|g_n\chi_{\cup_{i=n+1}^\infty E_i'}\|_{L_M}\ge
2\alpha,\;\;n=1,2,\dots$$
Hence, 
\begin{equation}\label{prop dd2}
\|g_n\chi_{E_n}\|_{L_M}\ge 2 \alpha\;\;\mbox{and}\;\;\|g_j\chi_{E_n}\|_{L_M}\le \alpha,\;\; 1\le j\le n-1\;,\; n=1,2,\dots
 \end{equation}

Thanks to the condition $1<\alpha_M^\infty\le \beta_M^\infty<2$, the space $L_M$ is reflexive. Consequently, one can select a weakly converging subsequence $\{g_{n_i}\}\subset \{g_n\}$. Then, setting $f_i:=g_{n_{i+1}}-g_{n_i},$ we see that $f_i\xrightarrow w 0$ in $L_M$.
Moreover, there is an  unconditional basis (say, the Haar system) in $L_M$ (see e.g. \cite[Theorem~2.c.6]{LT}). 
Therefore, by the Bessaga-Pe{\l}czy{\'n}ski Selection Principle (see, for instance, \cite[Proposition~1.a.12]{LT77}), passing 
 if it is necessary to some further subsequence of $\{f_i\}$ (and keeping the notation), we can assume that $\{f_i\}$ is equivalent in $L_M$ to some block basis of the above unconditional basis and hence it is an unconditional basic sequence  in this space. 
Moreover, $\|f_i\|_{L_M}\le 2$ for all $i=1,2,\dots,$ and from \eqref{prop dd2} it follows that
$$
\|f_i\chi_{E_{n_{i+1}}}\|_{L_M}\ge \|g_{n_{i+1}}\chi_{E_{n_{i+1}}}\|_{L_M}-\|g_{n_{i}}\chi_{E_{n_{i+1}}}\|_{L_M}\ge\alpha.$$
As a result, inequalities \eqref{extra import} hold for the above $f_i$ and $F_i:=E_{n_{i+1}}$, $i=1,2,\dots.$


Next, denoting $h_i:=f_i\chi_{F_i}$, $i=1,2,\dots$, by using unconditionality of the sequence $\{f_i\}$ and Khintchine's $L^1$-inequality \cite{szarek}, we get
\begin{eqnarray*}
\Big\|\sum_{k=1}^\infty c_kf_k\Big\|_{L_M} &\asymp& \int_0^1\Big\|\sum_{k=1}^\infty c_kf_kr_k(s)\Big\|_{L_M}\,ds\ge 
\frac{1}{\sqrt{2}}\Big\|\Big(\sum_{k=1}^\infty c_k^2f_k^2\Big)^{1/2}\Big\|_{L_M}\\ &\ge&\frac{1}{\sqrt{2}}\Big\|\Big(\sum_{k=1}^\infty c_k^2f_k^2\chi_{F_k}\Big)^{1/2}\Big\|_{L_M}= \frac{1}{\sqrt{2}}\Big\|\sum_{k=1}^\infty c_kh_k\Big\|_{L_M}.
\end{eqnarray*}
Hence, taking into account that the subspace $[f_i]$ is strongly embedded in $L_M$, we have
\begin{equation}\label{equiv_basis}
\Big\|\sum_{k=1}^\infty c_kf_k\Big\|_{1}\ge c\Big\|\sum_{k=1}^\infty c_kh_k\Big\|_{L_M}
\end{equation}
for some $c>0$.

Moreover, since  $\{f_n\}$ is a weakly null sequence in $L_M$, which  is separable and has the Fatou property, by a version of the  subsequence splitting property, proved in \cite[Proposition~3.2]{DSS} (see also \cite[Lemma 3.6]{ASS}), passing to some subsequence (and again preserving the notation), we obtain 
$$
f_n=u_n+v_n+w_n,\;\;n=1,2,\dots,$$
where $\{u_n\},\{v_n\},\{w_n\}$ are sequences from $L_M$ such that $u_n^*=u_1^*,$ $v_n$ are pairwise disjoint, $\lim_{n\to\infty}\|w_n\|_{L_M}=0, u_n\xrightarrow w 0,  v_n\xrightarrow w 0$ in $L_M$. 
It is clear that $v_n\xrightarrow w 0$ in $L_1$, and therefore, by disjointness, $\|v_n\|_1\rightarrow 0$. 
Hence, from the stability property of a basic sequence (see, e.g.,  \cite[Theorem~1.3.9]{AK}) and \eqref{equiv_basis} (passing again to some further subsequence) it follows that 
\begin{equation}\label{equiv_1l_M}
\Big\|\sum_{k=1}^\infty c_ku_k\Big\|_1\ge c' \Big\|\sum_{k=1}^\infty c_kh_k\Big\|_{L_M}
\end{equation}
for some $c'>0$.

Furthermore, by \cite[Theorem 4.5]{SS} (see also \cite[Proposition~2.1]{AKS}), we can select a subsequence of $\{u_n\}$
(again we keep the notation) such that $u_n=x_n+y_n,$ $n=1,2,\dots$, 
where $\{x_n\}$ is a sequence of martingale differences with respect to the standard dyadic filtration on $[0,1]$, $x_n\xrightarrow w 0$ in $L_1$ and $\|y_n\|_1\rightarrow 0$. Then, by \cite[Lemma 3.5]{ASS} 
(for general results on comparison of norms of sums of martingale differences and their disjoint copies in r.i.\ spaces see  \cite{ASW1}), we obtain
$$\Big\|\sum_{i=1}^nx_i\Big\|_1\leq C_1\Big\|\sum_{i=1}^n \oplus{x_i}\Big\|_{(L_1+L_2)(0,\infty)},\;\; n=1,2,\dots$$
Here, $\sum_{i=1}^n \oplus{x_i}$ is the disjoint sum of $x_i,$ $i=1,2,\dots$ (see Section \ref{aux1}), which can be thought as the function $\sum_{i=1}^n \overline{x_i}$, where $\overline{x_i}=f_i(t-i+1)\chi_{[i-1,i)}(t)$, $t>0$.

Since $\overline{u_i}=\overline{x_i}+\overline{y_i}$ and $m({\rm supp}\,\overline{y_i})\le 1$, then
$$
\|\overline{u_i}-\overline{x_i}\|_{(L_1+L_2)(0,\infty)}=\|\overline{y_i}\|_{(L_1+L_2)(0,\infty)}=\|{y_i}\|_{1},\;\;i=1,2,\dots.$$
Hence, denoting $z:=u_1^*$ and taking into account that $\|y_i\|_1\rightarrow 0$, in the same manner as above (passing, if necessary, to some further subsequence), we get
$$
\Big\|\sum_{i=1}^nu_i\Big\|_1\leq C_2\Big\|\sum_{i=1}^n\overline{u_i}\Big\|_{(L_1+L_2)(0,\infty)}.
$$
Note that the functions
$$
\sum_{i=1}^n\overline{u_i}(t)\;\;\mbox{and}\;\;\sum_{i=1}^nz(t-i+1)\chi_{(i-1,i)}(t)$$
are equimeasurable on $(0,\infty)$. Combining this together with the previous inequality and the definition of the norm in $(L_1+L_2)(0,\infty)$, we obtain that
\begin{equation}
\label{one more est}
\Big\|\sum_{i=1}^nu_i\Big\|_1\leq C_2\left(n\int_0^{1/n}{z}(s)\,ds+\left(n\int_{1/n}^1{z}(s)^2\,ds\right)^{1/2}\right),\;\; n=1,2,\dots,
\end{equation}

On the other hand, since $h_i$, $i=1,2,\dots$, are pairwise disjoint, we can assume (if necessary, passing to a subsequence) that the sequence $\{h_i\}$ is equivalent in $L_M$ to the unit vector basis $\{e_i\}$ in some Orlicz sequence space $l_\psi$, where $\psi\in C_M^\infty$ (see, for instance, \cite[Proposition~3]{LTIII}). Thus, in view of \eqref{equiv_1l_M}, \eqref{one more est} and the equation 
$$
\Big\|\sum_{k=1}^n e_k\Big\|_{l_\psi}=\frac{1}{\psi^{-1}(1/n)},\;\;n=1,2,\dots,
$$ 
we get the estimate 
$$
\frac{1}{\psi^{-1}(1/n)}\leq C_3\left(n\int_0^{1/n}{z}(s)\,ds+\left(n\int_{1/n}^1{z}(s)^2\,ds\right)^{1/2}\right),\;\;n=1,2,\dots,$$
or, by convexity of $\psi$,
\begin{equation}
\label{corr_M_g}
\frac{1}{\psi^{-1}(t)}\leq C\left(\frac1t\int_0^t{z}(s)\,ds+\left(\frac1t\int_t^1{z}(s)^2\,ds\right)^{1/2}\right),\;\; 0<t\leq 1,
\end{equation}
for some $C>0$.
Since $1<\alpha_M^\infty\le \beta_M^\infty<2$, then, by \cite{Mont-94} (see also \cite[\S\,II.8.6]{KPS} and \cite[p.~105]{KMP-07}), the operators $T_1$ and $T_2$ defined by 
$$
T_1x(t):=\frac1t\int_0^t{x}(s)\,ds\;\;\mbox{and}\;\;T_2x(t):=\left(\frac1t\int_t^1{x}(s)^2\,ds\right)^{1/2},\;\; 0<t\leq 1,$$
are bounded in $L_M$. Therefore, recalling that $z\in L_M$, we see that the function from the right-hand side of \eqref{corr_M_g} belongs to $L_M$ as well. Thus, by \eqref{corr_M_g}, ${1}/{\psi^{-1}}\in L_M$. Since $\psi\in C_M^\infty$, the proof of the proposition is completed.
\end{proof}

Now, we are ready to prove the first main result of the paper, an  extension of Rosenthal's theorem for $L^p$-spaces (see \cite[Theorem~13]{Ros}, \cite[Theorem~7.2.6]{AK} or Section \ref{Intro}), to the family of Orlicz spaces.

\begin{theor}
\label{theorem-main}
Let $M$ be an Orlicz function such that $1<\alpha_M^\infty\le \beta_M^\infty<2$ and $1/\psi^{-1}\not\in L_M$ for every $\psi\in C_{M}^{\infty}$. Suppose that $H$ is a subspace of $L_M$. The following conditions are equivalent:

(i) $H$ does not contain any infinite-dimensional subspace isomorphic to a subspace spanned in $L_M$ by pairwise disjoint functions (equivalently, to an Orlicz sequence space $\ell_\psi$, with ${\psi}\in C_M^\infty$);

(ii) $H$ is a strongly embedded subspace of $L_M$;

(iii) the unit ball $B_H$ of $H$ has equi-absolutely continuous norms in $L_M$.

In particular, conditions (i), (ii) and (iii) are equivalent provided if $M$ is an Orlicz function such that $1<\alpha_M^\infty\le \beta_M^\infty<2$ and it satisfies at least one of the conditions  \eqref{eq0} or \eqref{eq00}.
\end{theor}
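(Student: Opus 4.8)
The plan is to prove the cyclic chain of implications $(iii)\Rightarrow(ii)\Rightarrow(i)\Rightarrow(iii)$, exploiting the auxiliary results already assembled. The implication $(iii)\Rightarrow(ii)$ is immediate from Lemma \ref{prop 1}, which states precisely that if $B_H$ has equi-absolutely continuous norms in $L_M$, then $H$ is strongly embedded. So no work is required there beyond citing that lemma.

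For $(ii)\Rightarrow(i)$, I would argue by contraposition. Suppose $H$ contains an infinite-dimensional subspace $G$ isomorphic to a subspace spanned in $L_M$ by pairwise disjoint functions; by the structural result \cite[Proposition~4]{LTIII} recalled in Section \ref{prel2}, any such span is isomorphic to an Orlicz sequence space $\ell_\psi$ with $\psi\in C_M^\infty$. The key point is that such a disjointly-supported copy of $\ell_\psi$ inside $L_M$ cannot be strongly embedded under our standing hypothesis. Indeed, for a normalized sequence of pairwise disjoint functions $\{u_n\}$ generating $\ell_\psi$, the $L_M$-norm of $\sum a_n u_n$ is comparable to $\|(a_n)\|_{\ell_\psi}$, while the concavity/disjointness forces convergence in measure (equivalently, $L^1$-smallness) to behave differently from $X$-norm convergence precisely because $1/\psi^{-1}\in L_M$ would have to hold were $G$ strongly embedded. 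More directly, I would invoke the author's result \cite[Corollary~3.3]{A-16}: if $L_M$ (separable, with the Fatou property) has a strongly embedded subspace isomorphic to $\ell_\psi$, then $1/\psi^{-1}\in L_M$. Since $\psi\in C_M^\infty$, this contradicts the hypothesis $1/\psi^{-1}\not\in L_M$ for every $\psi\in C_M^\infty$. Hence a strongly embedded $H$ cannot contain such a subspace, establishing $(ii)\Rightarrow(i)$.

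The substantive implication is $(i)\Rightarrow(iii)$, and this is where the main obstacle lies. I would again argue by contraposition: assume $B_H$ fails to have equi-absolutely continuous norms in $L_M$, and produce an infinite-dimensional subspace of $H$ isomorphic to some $\ell_\psi$ with $\psi\in C_M^\infty$. The heart of the matter is Proposition \ref{prop-main}, whose proof constructs, from the failure of equi-absolute continuity, an unconditional basic sequence $\{f_i\}\subset H$ together with disjoint copies via the subsequence-splitting and martingale-difference machinery. What I actually need for $(i)\Rightarrow(iii)$ is a subspace of $H$ that is a genuine copy of $\ell_\psi$; the natural candidate is the span $[f_i]$ of the extracted sequence. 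The technical crux is to confirm that, under the present hypothesis that no such copy exists (the negation of (iii) feeding into the negation of (i)), the disjointly-supported sequence $\{h_i\}=\{f_i\chi_{F_i\}}$ spanning $\ell_\psi$ is in fact equivalent \emph{inside} $H$ to $\{f_i\}$ itself, not merely dominated by it. The inequalities \eqref{equiv_1l_M}--\eqref{corr_M_g} give one direction; the reverse domination follows because $\|f_i\|_{L_M}\le 2$ and unconditionality yield an upper bound of $\|\sum c_i f_i\|_{L_M}$ by $\|(c_i)\|_{\ell_\psi}$ via the same Khintchine and disjointification estimates run in the opposite direction. I expect the delicate bookkeeping to be ensuring all the successive passages to subsequences are compatible, so that a single subsequence realizes simultaneously the unconditionality, the weak nullity, the splitting decomposition, and the two-sided $\ell_\psi$-equivalence.

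Finally, the concluding ``In particular'' clause follows at once: Proposition \ref{prop1} shows that either of the conditions \eqref{eq0} or \eqref{eq00} (together with $1<\alpha_M^\infty\le\beta_M^\infty<2$) implies $1/\psi^{-1}\not\in L_M$ for every $\psi\in C_M^\infty$, which is exactly the standing hypothesis of the theorem. Thus the equivalence of $(i)$, $(ii)$, $(iii)$ applies verbatim, and no additional argument is needed beyond quoting Proposition \ref{prop1}.
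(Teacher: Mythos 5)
Your implications (iii)$\Rightarrow$(ii) and (ii)$\Rightarrow$(i) are essentially the paper's own arguments (for (ii)$\Rightarrow$(i) you should still verify, as the paper does via Lemmas \ref{lemma1} and \ref{Lemma 20}, that $\psi$ is equivalent near zero to a $(1+\varepsilon)$-convex and $(2-\varepsilon)$-concave Orlicz function, since \cite[Corollary~3.3]{A-16} is applied under that hypothesis). The genuine gap is in your closing implication (i)$\Rightarrow$(iii). The claimed ``reverse domination'' $\|\sum_i c_if_i\|_{L_M}\lesssim\|(c_i)\|_{\ell_\psi}$ does not follow from the properties the construction actually delivers (unconditionality, $\|f_i\|_{L_M}\le 2$, disjointness of the $F_i$, estimate \eqref{extra import}, and the equivalence of $\{h_i\}$ with the $\ell_\psi$-basis). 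Concretely: pick $p<q$ in $[\alpha_M^\infty,\beta_M^\infty]$, so that $t^p,t^q\in C_M^\infty$; let $\{g_i\}$ be disjoint functions spanning $\ell^q$ in $L_M$ with supports $F_i$, and $\{g_i'\}$ disjoint functions spanning $\ell^p$ with supports disjoint from all the $F_i$; set $f_i=\frac12(g_i+g_i')$. All of the above properties hold with $h_i=\frac12 g_i$ and $\psi(t)=t^q$, yet $\|\sum_i c_if_i\|_{L_M}\asymp\|(c_i)\|_{\ell^p}$, which is not dominated by $\|(c_i)\|_{\ell^q}$; so $[f_i]$ is not equivalent to $\ell_\psi$, and no amount of ``Khintchine run in the opposite direction'' can prove otherwise, because the square function $(\sum_i c_i^2f_i^2)^{1/2}$ carries the mass of the $f_i$ outside $\bigcup_i F_i$, which the $h_i$ do not control. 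A second, independent problem is that the inequalities \eqref{equiv_1l_M}--\eqref{corr_M_g} you invoke for the ``easy direction'' are derived in the paper from \eqref{equiv_basis}, which uses that $[f_i]$ is strongly embedded in $L_M$ --- exactly the information you do not possess when assuming only (i) in a cyclic proof where (i)$\Rightarrow$(ii) has not been established independently.

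The paper avoids this entirely by never proving (i)$\Rightarrow$(iii) directly: it proves (i)$\Rightarrow$(ii) by the Kadec--Pe{\l}czy{\'n}ski-type dichotomy (if $H$ is not strongly embedded, some normalized sequence in $H$ has a subsequence equivalent in $L_M$ to the unit vector basis of an $\ell_\psi$ with $\psi\in C_M^\infty$ by \cite[Proposition~3]{LTIII}, and such a space is disjointly representable by \cite[Proposition~4]{LTIII}, contradicting (i)), and then obtains (ii)$\Rightarrow$(iii) as the contrapositive of Proposition \ref{prop-main} combined with the standing hypothesis that $1/\psi^{-1}\notin L_M$ for every $\psi\in C_M^\infty$; Proposition \ref{prop-main} is formulated for strongly embedded subspaces, which is precisely why the order (i)$\Rightarrow$(ii)$\Rightarrow$(iii) matters. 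Your cycle can be repaired along the same lines: given not-(iii), either $[f_i]$ fails to be strongly embedded, in which case the LTIII dichotomy already yields not-(i), or $[f_i]$ is strongly embedded and its ball still fails equi-absolute continuity (witnessed by the $f_i$ themselves), so Proposition \ref{prop-main} produces $\psi\in C_M^\infty$ with $1/\psi^{-1}\in L_M$, contradicting the hypothesis. Your treatment of the final ``In particular'' clause via Proposition \ref{prop1} is correct and agrees with the paper.
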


\begin{proof}
(i)$\Longrightarrow$ (ii).
If $H$ fails to be a strongly embedded subspace of $L_M$, then there is a sequence $\{f_n\}\subset H$, $\|f_n\|_{L_M}=1$, $n=1,2,\dots$ such that norms of $L_M$ and $L^1$ are not equivalent on $H$ (see e.g. \cite[Proposition~6.4.5]{AK}). Hence, there is a subsequence $\{f_n'\}\subset \{f_n\}$, which is equivalent in $L_M$ to the unit vector  basis of some Orlicz sequence space $\ell_{\psi}$, with ${\psi}\in C_M^\infty$ \cite[Proposition~3]{LTIII}. Since $\ell_{\psi}$ may be spanned in $L_M$ by pairwise disjoint functions \cite[Proposition~4]{LTIII},
this contradicts the hypothesis of this implication.

(ii)$\Longrightarrow$ (i).
Suppose that $H$ is a strongly embedded subspace of $L_M$, but there is a sequence $\{g_n\}\subset H$, which is equivalent in $L_M$ to a sequence of pairwise disjoint functions. Then, again by \cite[Proposition~3]{LTIII}, we can assume (passing to a subsequence if necessary) that $\{g_n\}$ is equivalent in $L_M$ to the unit vector basis of some Orlicz sequence space $\ell_{\psi}$, where ${\psi}\in C_M^\infty$. On the other hand, in view of Lemma \ref{lemma1}, we have $1<\alpha_M^\infty\le \alpha_\psi^0\le \beta_\psi^0\le\beta_M^\infty<2$, which implies, by Lemma \ref{Lemma 20}, that $\psi$ is equivalent to some $(1+\varepsilon)$-convex and $(2-\varepsilon)$-concave Orlicz function for small values of the argument and some $\varepsilon>0$. Therefore, since $H$ is strongly embedded in $L_M$, from \cite[Corollary~3.3]{A-16} it follows that ${1}/{{\psi}^{-1}}\in L_M$. Since ${\psi}\in C_M^\infty$, this contradicts the hypothesis of the theorem.

Combining the condition that $1/\psi^{-1}\not\in L_M$ for every $\psi\in C_{M}^{\infty}$ and Proposition \ref{prop-main}, we see that (ii) implies (iii). Therefore, since the converse implication (iii)$\Longrightarrow$ (ii) is obvious, the proof of the first assertion is completed.

Since now the second assertion is an immediate consequence of Proposition \ref{prop1}, the theorem is proved.
\end{proof}





Recall that a r.i.\ space $X$ is said to be binary whenever the characteristic $\eta_X(H)$ (see Section~\ref{prel3}) takes on closed linear subspaces $H\subset X$ only two values, $0$ and $1$. Clearly, Theorem \ref{theorem-main} implies the following.

\begin{cor}
\label{Cor2}
If $M$ is an Orlicz function such that $1<\alpha_M^\infty\le \beta_M^\infty<2$ and $1/\psi^{-1}\not\in L_M$ for every $\psi\in C_{M}^{\infty}$, then the Orlicz space $L_M$ is binary.

In particular, $L_M$ is binary whenever $1<\alpha_M^\infty\le \beta_M^\infty<2$ and $M$ satisfies at least one of the conditions \eqref{eq0} or \eqref{eq00}.
\end{cor}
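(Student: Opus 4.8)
The plan is to prove binarity directly from the definition: for an arbitrary closed linear subspace $H\subset L_M$ I must show that $\eta_{L_M}(H)$ can only equal $0$ or $1$. I would split into the two cases $\eta_{L_M}(H)=1$ and $\eta_{L_M}(H)<1$. In the first case there is nothing to prove, so the whole content is the implication $\eta_{L_M}(H)<1\Longrightarrow\eta_{L_M}(H)=0$, and this is where Theorem \ref{theorem-main} enters.

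So assume $\eta_{L_M}(H)<1$. Since $\alpha_M^\infty>1$, the Orlicz function $M$ grows strictly faster than linearly at infinity, hence $L_M\subsetneq L^1$ and in particular $L_M\neq L^1$. By the discussion recalled in Section \ref{prel3} (the statement that $\eta_X(H)<1$, together with $X\neq L^1$ and $H$ a closed subspace, forces strong embedding), the assumption $\eta_{L_M}(H)<1$ guarantees that $H$ is strongly embedded in $L_M$. This is precisely condition (ii) of Theorem \ref{theorem-main}, whose hypotheses---namely $1<\alpha_M^\infty\le\beta_M^\infty<2$ together with $1/\psi^{-1}\not\in L_M$ for every $\psi\in C_M^\infty$---are exactly the hypotheses of the corollary. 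Invoking the implication (ii)$\Longrightarrow$(iii) of that theorem, I obtain that the unit ball $B_H$ has equi-absolutely continuous norms in $L_M$.

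It then remains to return to the language of the characteristic $\eta$. Since $B_H$ is bounded, Lemma \ref{lemma 2} gives $\eta_{L_M}(B_H)=0$. Moreover, the ratio $\|x^*\chi_{[0,t]}\|_{L_M}/\|x\|_{L_M}$ is homogeneous of degree zero in $x$, so the supremum over the nonzero elements of $H$ coincides with the supremum over the nonzero elements of $B_H$; passing to the limit as $t\to 0$ yields $\eta_{L_M}(H)=\eta_{L_M}(B_H)=0$. This completes the dichotomy and shows that $L_M$ is binary. For the final (``in particular'') assertion I would simply cite Proposition \ref{prop1}, which says that each of the conditions \eqref{eq0} and \eqref{eq00}, in the presence of $1<\alpha_M^\infty\le\beta_M^\infty<2$, already implies $1/\psi^{-1}\not\in L_M$ for every $\psi\in C_M^\infty$; thus the hypotheses of the first part are automatically satisfied.

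The argument is essentially formal, as the word ``Clearly'' preceding the statement suggests, so I do not expect a genuine obstacle. The only two points deserving a line of care are the scale-invariance identity $\eta_{L_M}(H)=\eta_{L_M}(B_H)$ (which lets me pass from a statement about the unit ball to one about the whole subspace) and the bookkeeping verification that the reflexivity/strong-embedding hypotheses of Theorem \ref{theorem-main} are indeed in force; everything else is a direct citation of the preceding results.
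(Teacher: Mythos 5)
Your proof is correct and takes essentially the same route as the paper: the paper's one-line justification (``Clearly, Theorem \ref{theorem-main} implies the following'') is exactly the argument you spell out, namely the observation recorded in Section \ref{prel3} that a r.i.\ space is binary once every strongly embedded closed subspace has a unit ball with equi-absolutely continuous norms, obtained via the dichotomy $\eta_{L_M}(H)<1\Rightarrow H$ strongly embedded (using $L_M\ne L^1$), then Theorem \ref{theorem-main} (ii)$\Longrightarrow$(iii), then Lemma \ref{lemma 2} together with the homogeneity identity $\eta_{L_M}(H)=\eta_{L_M}(B_H)$. Your appeal to Proposition \ref{prop1} for the ``in particular'' assertion also matches the paper's reliance on the final statement of Theorem \ref{theorem-main}.
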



Let $L_M$ be an Orlicz space with $1<\alpha_M^\infty\le \beta_M^\infty<2$. From Theorem \ref{theorem-main} it follows that $L_M$ satisfies the extension of Rosenthal's theorem, given in Theorem \ref{theorem-main}, whenever $1/\psi^{-1}\not\in L_M$ for every $\psi\in C_{M}^{\infty}$. Conversely, what we can say about an Orlicz function $M$ if we know that this extension holds for the Orlicz space $L_M$? 

To this end, the following result will be useful.

\begin{prop}
\label{Prop1a}
Let a r.i.\ space $X$ contain two sequences $\{x_n\}_{n=1}^\infty$ and $\{y_n\}_{n=1}^\infty$ satisfying the following conditions:

(a) $x_n$, $n=1,2,\dots$, are pairwise disjoint;

(b) $\eta_{X}([y_n])=0$;

(c) there is a constant $C>0$ such that for all $c_n\in\mathbb{R}$ we have
$$
\Big\|\sum_{n=1}^\infty c_nx_n\Big\|_X\leq C\Big\|\sum_{n=1}^\infty c_ny_n\Big\|_X.$$

Then, the space $X$ is not binary, i.e., there is a subspace $H$ of $X$ with $\eta_{X}(H)\in(0,1)$. Every such a subspace $H$ is strongly embedded in $X$, while its unit ball $B_H$ has non-equi-absolutely continuous norms in $X$.

If instead of (c) the stronger condition

(c') $\{x_n\}_{n=1}^\infty$ and $\{y_n\}_{n=1}^\infty$ are equivalent in $X$ to the unit vector basis of some Orlicz space $\ell_ \psi$

holds, then additionally $H$ is isomorphic to the space $\ell_\psi$.
\end{prop}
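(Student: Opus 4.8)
The plan is to obtain the required subspace as a small scaled perturbation of $\{y_n\}$ by the disjoint vectors $\{x_n\}$. After the coupled rescaling $x_n\mapsto x_n/\|y_n\|_X$, $y_n\mapsto y_n/\|y_n\|_X$, which preserves (a), leaves the span $[y_n]$ (hence (b)) unchanged, and keeps (c) valid with the same constant $C$, I may assume $\|y_n\|_X=1$; then (c) applied to a single coordinate gives $\|x_n\|_X\le C$, and, passing to a subsequence (which preserves (a)--(c)), I may also assume the semi-normalization $\inf_n\|x_n\|_X\ge a>0$. The $x_n$ being disjointly supported on sets $A_n\subset[0,1]$ with $\sum_n m(A_n)\le1$, we have $m(A_n)\to0$, so after discarding finitely many indices the set $A:=\bigcup_n A_n$ satisfies $m(A)<\delta$. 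By (b) and Lemma~\ref{lemma 2} the unit ball of $[y_n]$ has equi-absolutely continuous norms, whence $\|v\chi_E\|_X\le\omega(m(E))\|v\|_X$ for every $v\in[y_n]$, with $\omega(s)\to0$ as $s\to0$; I fix $\delta$ so small that $\omega(\delta)\le\tfrac12$. Finally I set
\[
z_n:=\lambda x_n+y_n,\qquad H:=[z_n],
\]
with a small constant $\lambda>0$ chosen below.

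The first task is to bound $\eta_X(H)$ from above. For $f=\sum_n c_n z_n=\lambda u+v$, where $u:=\sum_n c_n x_n$ is supported on $A$ and $v:=\sum_n c_n y_n\in[y_n]$, every set $E$ with $m(E)\le t$ satisfies, by (c) and the equi-continuity estimate,
\[
\|f\chi_E\|_X\le\lambda\|u\|_X+\|v\chi_E\|_X\le(\lambda C+\omega(t))\|v\|_X.
\]
Using the identity $\|f^*\chi_{[0,t]}\|_X=\sup_{m(E)\le t}\|f\chi_E\|_X$, together with $\|f\|_X\ge\|v\chi_{A^c}\|_X\ge(1-\omega(\delta))\|v\|_X\ge\tfrac12\|v\|_X$ (valid since $f\chi_{A^c}=v\chi_{A^c}$ and $m(A)<\delta$), I obtain $\|f^*\chi_{[0,t]}\|_X/\|f\|_X\le2(\lambda C+\omega(t))$ uniformly in $f$, and hence $\eta_X(H)\le2\lambda C$. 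The decisive feature is that the \emph{entire} disjoint part contributes at most $\lambda\|u\|_X\le\lambda C\|v\|_X$, no matter how strongly $u$ concentrates; fixing $\lambda<1/(4C)$ forces $\eta_X(H)<1$. This scaling is exactly what lets me avoid establishing any uniform strict monotonicity of the norm under addition of a disjoint, spread-out summand — the step I would otherwise expect to be the main obstacle, and the one that genuinely needs hypothesis (b) in order to exclude $L^\infty$-type degeneracy.

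For the lower bound I test $\eta_X(H)$ on the single vectors $z_n$. Given a small $t$, I choose $n$ with $m(A_n)<t$; then $A_n$ is admissible in the supremum above, so
\[
\|z_n^*\chi_{[0,t]}\|_X\ge\|z_n\chi_{A_n}\|_X\ge\lambda\|x_n\|_X-\|y_n\chi_{A_n}\|_X\ge\lambda a-\omega(t),
\]
while $\|z_n\|_X\le\lambda\|x_n\|_X+\|y_n\|_X\le\lambda C+1$. Letting $t\to0$ gives $\eta_X(H)\ge\lambda a/(\lambda C+1)>0$. Consequently $\eta_X(H)\in(0,1)$, and so $X$ is not binary.

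The remaining assertions are now immediate. If $H'$ is any subspace with $\eta_X(H')\in(0,1)$, then $\eta_X(H')<1$ yields the equivalence of the $X$- and $L^1$-norms on $H'$ and hence, as recalled in Section~\ref{prel3}, that $H'$ is strongly embedded in $X$; on the other hand $\eta_X(H')>0$ means, by Lemma~\ref{lemma 2} applied to the bounded set $B_{H'}$, that $B_{H'}$ does not have equi-absolutely continuous norms. Finally, under (c') both $\|u\|_X$ and $\|v\|_X$ are equivalent to $\|(c_n)\|_{\ell_\psi}$, so the two-sided estimate $\tfrac12\|v\|_X\le\|f\|_X\le(\lambda C+1)\|v\|_X$ shows that $\{z_n\}$ is equivalent in $X$ to the unit vector basis of $\ell_\psi$; that is, $H$ is isomorphic to $\ell_\psi$.
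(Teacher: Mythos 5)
Your construction is essentially the paper's: you form $z_n=\lambda x_n+y_n$, bound $\eta_X([z_n])$ above by $O(\lambda)$ using (b) and (c), and bound it below by a positive quantity by testing on the vectors $z_n$ themselves, whose disjoint parts sit on sets of vanishing measure. The only structural difference is minor: the paper first applies the dilation operator so that all $x_n$ are supported in $[0,1/2]$ and all $y_n$ in $[1/2,1]$, which makes the two parts of each $z=\lambda u+v$ disjoint and gives $\|z\|_X\ge\|v\|_X$ for free; you keep the supports in place and compensate with the equicontinuity modulus $\omega$, via $\|f\|_X\ge\|v\chi_{A^c}\|_X\ge(1-\omega(\delta))\|v\|_X$. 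Both devices work and cost the same; your use of the identity $\|f^*\chi_{[0,t]}\|_X=\sup_{m(E)\le t}\|f\chi_E\|_X$ is a standard fact on $[0,1]$.

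The one step I must flag is your semi-normalization: after scaling so that $\|y_n\|_X=1$, you claim that ``passing to a subsequence'' lets you assume $\inf_n\|x_n\|_X\ge a>0$. That does not follow from (a)--(c): nothing prevents $\|x_n\|_X\to 0$, in which case no subsequence is semi-normalized and your lower bound $\eta_X(H)\ge\lambda a/(\lambda C+1)$ evaporates. In fact the proposition as literally stated fails in that regime: take $X=L^1$, $y_n=r_n$ the Rademacher functions (so $\eta_{L^1}([r_n])=0$ by Khintchine's inequality and Lemma~\ref{lemma 2}), and $x_n$ pairwise disjoint with $\sum_n\|x_n\|_1^2<\infty$; then (a)--(c) hold by Cauchy--Schwarz, yet $L^1$ is binary, so no subspace $H$ with $\eta_{L^1}(H)\in(0,1)$ exists. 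Hence some (semi-)normalization of $\{x_n\}$ relative to $\{y_n\}$ is a hidden hypothesis. To be fair, the paper's own proof makes exactly the same tacit assumption --- its lower bound $\eta_X(H_\lambda)\ge\lambda/(\lambda+1)$ presupposes $\|x_n\|_X=\|y_n\|_X=1$ --- and under (c'), the only case the paper actually uses later, both sequences are automatically semi-normalized, so the issue disappears there. Your proof is therefore on par with the paper's, but you should state the semi-normalization as an explicit assumption (or derive it from (c')) rather than attribute it to passing to a subsequence, which is a false justification.
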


\begin{proof}
We use some arguments from the proof of Theorem~2 in \cite{Nov-sb}.

Since $X$ is a r.i.\ space, the dilation operator $\sigma_\tau$ is bounded in $X$ for each $\tau>0$ (see Section \ref{prel1}). Hence, without loss of generality, we may assume that ${\rm supp}\,x_n\subset [0,1/2]$ and ${\rm supp}\,y_n\subset [1/2,1]$, $n=1,2,\dots$. Let $\lambda \in(0,1)$ (it will be specified later), $x_n':=\lambda x_n$,  $z_n:=x_n'+y_n$, $n=1,2,\dots$, and $H_\lambda:=[z_n]$. 

On the one hand, since $\lim_{n\to\infty}m({\rm supp}\,x_n)=0$, for every $\lambda \in(0,1)$ we have
\begin{equation}\label{lower estimate}
\eta_{X}(H_\lambda)\ge \inf_{n=1,2,\dots}\frac{\|z_n\chi_{{\rm supp}\,x_n}\|_X}{\|z_n\|_X}\ge \frac{\lambda}{\lambda+1}>0.
\end{equation}
On the other hand, from condition (b) it follows that
$$
\eta_{X}(H_\lambda)\le \sup_{z\in H_\lambda}\frac{\|z\chi_{[0,1/2]}\|_X}{\|z\|_X}.
$$
If $z=\sum_{n=1}^\infty c_nz_n$, then, by (c),
$$
\|z\chi_{[0,1/2]}\|_X=\lambda \Big\|\sum_{n=1}^\infty c_nx_n\Big\|_X\le C \lambda \Big\|\sum_{n=1}^\infty c_ny_n\Big\|_X=C\lambda\|z\chi_{[1/2,1]}\|_X.$$
Consequently, $\eta_{X}(H_\lambda)\le C\lambda$. Thus, $\eta_{X}(H_\lambda)<1$ whenever $\lambda<1/C$. Combining this together with estimate \eqref{lower estimate}, for each $\lambda<1/C$, we get $\eta_{X}(H_\lambda)\in(0,1)$. Thus, the subspace $H_\lambda$ is strongly embedded in $X$, while its unit ball $B_H$ has non-equi-absolutely continuous norms in $X$. Hence, the proof of the first assertion of the proposition is completed. As concerns the second one, it is a direct consequence of the definition of the subspace $H_\lambda$.
\end{proof}

\begin{theor}
\label{theorem-main2}
Let $M$ be an Orlicz function such that $M\in \Delta_2^\infty$, $\tilde{M}\in \Delta_2^\infty$ and there exists a function $\psi\in C_{M}^{\infty}$ satisfying the conditions: for some $C>0$ and all $0\le u,v\le 1$
\begin{equation}
\label{submult}
\psi(uv)\le C\psi(u)\psi(v)
\end{equation}
and $1/\psi^{-1}\in L_M$.

Then there is a strongly embedded subspace $H$ of the Orlicz space $L_M$ such that the unit ball $B_H$ of $H$ fails to have equi-absolutely continuous norms in $L_M$.
\end{theor}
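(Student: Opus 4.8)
The plan is to reduce everything to Proposition~\ref{Prop1a}. According to that proposition, it suffices to produce in $L_M$ a pairwise disjoint sequence $\{x_n\}$ and a sequence $\{y_n\}$ with $\eta_{L_M}([y_n])=0$ such that $\|\sum_n c_nx_n\|_{L_M}\le C\|\sum_n c_ny_n\|_{L_M}$ for all scalars $c_n$. Indeed, once conditions (a), (b), (c) of Proposition~\ref{Prop1a} are verified, that proposition yields a subspace $H=H_\lambda$ (for small $\lambda$) which is strongly embedded in $L_M$ while its unit ball has non-equi-absolutely continuous norms, which is exactly the desired conclusion. The disjoint sequence is immediate: since $\psi\in C_M^\infty$, by \cite[Proposition~4]{LTIII} there are pairwise disjoint $\{x_n\}\subset L_M$ equivalent to the unit vector basis of $\ell_\psi$, so $\|\sum_n c_nx_n\|_{L_M}\asymp\|(c_n)\|_{\ell_\psi}$.

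For $\{y_n\}$ I would take a sequence of independent, mean-zero, identically distributed functions on $[0,1]$ (such sequences exist on $[0,1]$) whose common distribution is that of a symmetric $f$ with $|f|^*=1/\psi^{-1}$; here $f\in L_M$ precisely because $1/\psi^{-1}\in L_M$. By the analysis of Subsection~\ref{aux1}, resting on the Johnson--Schechtman equivalence \eqref{JoSh}, the sequence $\{y_n\}$ is equivalent in $L_M$ to the unit vector basis of $\ell_{\psi_f}$, where $\psi_f(u)=\int_0^1\theta(u|f(t)|)\,dt$ with $\theta(s)=s^2$ for $0<s\le1$ and $\theta(s)=M(s)$ for $s\ge1$. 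Condition (c) then reduces to the one-sided comparison $\psi\lesssim\psi_f$ near zero (i.e. $\ell_{\psi_f}\hookrightarrow\ell_\psi$). Splitting $\psi_f=I_1+I_2$ at the crossover $t=\psi(u)$, where $I_2(u)=u^2\int_{\psi(u)}^1\psi^{-1}(t)^{-2}\,dt$ is the contribution of the quadratic branch, an estimate of $I_2$ over $t\in[\psi(u),\psi(2u)]$ gives $I_2(u)\ge\tfrac14(\psi(2u)-\psi(u))$; since $\alpha_\psi^0\ge\alpha_M^\infty>1$ (Lemma~\ref{lemma1}, using $\tilde M\in\Delta_2^\infty$), $\psi$ grows super-linearly, so $\psi(2u)\ge(1+\delta)\psi(u)$ and hence $\psi_f\ge I_2\gtrsim\psi$. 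Thus (c) holds for free, and it remains to verify (b), namely $\eta_{L_M}([y_n])=0$, which by Lemma~\ref{lemma 2} is equi-absolute continuity of the $L_M$-unit ball of $[y_n]$.

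To establish (b) I would invoke the weak de~la~Vall\'ee~Poussin criterion, Lemma~\ref{lemma 3}. Applied to $f\in L_M$ (legitimate because $M,\tilde M\in\Delta_2^\infty$) it produces an Orlicz function $N$ with $N/M\to\infty$, $N,\tilde N\in\Delta_2^\infty$ and $\int_0^1 N(|f|)\,dt<\infty$, so $f\in L_N$. Rerunning the computation of Subsection~\ref{aux1} in $L_N$ shows $\{y_n\}$ is equivalent in $L_N$ to the unit vector basis of $\ell_{\psi_f^N}$, where $\psi_f^N(u)=\int_0^1\theta_N(u|f|)\,dt$ and $\theta_N$ replaces $M$ by $N$ above the (unchanged) crossover $t=\psi(u)$. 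Consequently $\psi_f^N=I_1^N+I_2$ with the same quadratic term $I_2$, and the only discrepancy is between $I_1^N(u)=\int_0^u N(u/s)\,d\psi(s)$ and $I_1(u)=\int_0^u M(u/s)\,d\psi(s)$. The key point is that passing from $M$ to the slightly larger $N$ inflates this integral by at most a bounded factor, so that $\psi_f^N\asymp\psi_f$. Granting this, the $L_N$- and $L_M$-norms are equivalent on $[y_n]$, so the $L_M$-unit ball of $[y_n]$ is bounded in $L_N$; since $N/M\to\infty$, a routine equi-integrability argument (split $|z|$ at a large level, use $M\le N/A$ above it) forces its $L_M$-norms to be equi-absolutely continuous, i.e. $\eta_{L_M}([y_n])=0$. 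With (a), (b), (c) in hand, Proposition~\ref{Prop1a} completes the proof; moreover, if one strengthens the comparison to $\psi_f\asymp\psi$ one obtains, via condition (c$'$), that $H\cong\ell_\psi$.

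I expect the single genuine obstacle to be the comparison $\psi_f^N\asymp\psi_f$, equivalently the control of the large-value contribution $I_1^N(u)=\int_0^u N(u/s)\,d\psi(s)\lesssim\psi_f(u)$ along the whole range of integration. It is exactly here that the submultiplicativity hypothesis \eqref{submult} is meant to be used: combined with $\psi\in C_M^\infty$ (which links the growth of $M$ to $\psi$, as in the proofs of Lemmas~\ref{lemma1} and the branch \eqref{eq00} of Proposition~\ref{prop1}), submultiplicativity of $\psi$ yields a multiplicative majorization of $M(u/s)$ by $\psi$ and provides the convergence margin that absorbs the slowly growing factor $N/M$ supplied by Lemma~\ref{lemma 3}. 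Every other ingredient is either a direct citation (the disjoint copy from \cite{LTIII}, the i.i.d.\ description from \eqref{JoSh}) or a one-line estimate (the lower bound on $I_2$, the equi-integrability implication), so the entire technical weight of the theorem sits in quantifying how far the de~la~Vall\'ee~Poussin enlargement may be pushed while keeping the Orlicz function of the independent sequence comparable.
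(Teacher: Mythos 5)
Your skeleton coincides with the paper's own proof: both feed Proposition \ref{Prop1a} with a disjoint sequence $\{x_n\}$ spanning $\ell_\psi$ (from $\psi\in C_M^\infty$ via \cite[Proposition~4]{LTIII}) and a mean-zero i.i.d.\ sequence $\{y_n\}$ distributed as $1/\psi^{-1}$, and both obtain condition (b) by producing, via Lemma \ref{lemma 3}, a larger function $N$ with $1/\psi^{-1}\in L_N$, $N,\tilde N\in\Delta_2^\infty$ and $N/M\to\infty$, and then applying a de la Vall\'ee Poussin--type argument. The structural difference is that the paper handles the span of $\{y_n\}$ by citing \cite[Theorem~3.8]{A-16} (see also \cite[Theorem~4.1]{ASZ-22}), which states precisely that under \eqref{submult} the mean-zero independent copies of $1/\psi^{-1}$ are equivalent to the unit vector basis of $\ell_\psi$ in \emph{every} r.i.\ space containing $1/\psi^{-1}$; the paper applies this black box twice, to $X=L_M$ and to $X=L_N$, and conditions (b) and (c$'$) fall out. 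You instead attempt to rederive this fact by hand from the Johnson--Schechtman formula \eqref{JoSh}.

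That rederivation is exactly where your proposal has a genuine gap. The whole burden of the theorem rests on the estimate $I_1^N(u)=\int_0^u N(u/s)\,d\psi(s)\lesssim\psi_f(u)$ near zero (equivalently $\psi_f^N\asymp\psi_f$), which is the only place the hypothesis \eqref{submult} can enter; without it condition (b), i.e.\ $\eta_{L_M}([y_n])=0$, is unestablished and the appeal to Proposition \ref{Prop1a} collapses. You do not prove this estimate: you state that you ``expect'' it to hold because submultiplicativity ``is meant to be used'' there and ``provides the convergence margin'' --- that is a description of a hope, not an argument, and you say so yourself. For the record, the gap is closable along the lines you gesture at: substituting $s=uv$ gives $I_1^N(u)=\int_0^1 N(1/v)\,d_v\bigl[\psi(uv)\bigr]$, and an integration by parts combined with $\psi(uv)\le C\psi(u)\psi(v)$ and with $\int_0^1 N(1/v)\,d\psi(v)<\infty$ (which is exactly the statement $1/\psi^{-1}\in L_N$ furnished by Lemma \ref{lemma 3}) yields $I_1^N(u)\le C'\psi(u)\lesssim\psi_f(u)$; but this computation is essentially the content of the theorem of \cite{A-16} that the paper cites, i.e.\ it is the technical core of the result, and cannot be left as an expectation. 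A minor secondary point: your lower bound for $I_2$ invokes $\psi(2u)\ge(1+\delta)\psi(u)$, which does not follow from $\alpha_\psi^0>1$ for the factor $2$ specifically (the equivalence constants in Lemma \ref{Lemma 20} can destroy it); the repair is to choose $K$ large enough that $\psi(Ku)\ge 2\psi(u)$ and integrate over $[\psi(u),\psi(Ku)]$, which costs only a factor $K^{-2}$.
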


\begin{proof}
First, since $\psi\in C_{M}^{\infty}$, then there is a sequence $\{x_n\}_{n=1}^\infty$ of pairwise disjoint functions which is equivalent to the unit vector basis in the Orlicz space $\ell_\psi$ (see Section \ref{prel2}).

Let us consider the Marcinkiewicz space $M(\varphi)$, where $\varphi(t):=t/\psi^{-1}(t)$, $0\le t\le 1$ (see Section \ref{prel1}). Since $\tilde{M}\in \Delta_2^\infty$, then $\alpha_M^\infty>1$. Consequently, by Lemmas \ref{lemma1} and \ref{Lemma 20}, the function $\psi$ is $(1+\varepsilon)$-convex for some $\varepsilon>0$ and small values of the argument. Then, applying \cite[Theorem~II.5.3]{KPS} (see also Lemma 3.6 in \cite{A-16}), one can easily check that 
$$
\|x\|_{M(\varphi)}\asymp \sup_{0<t\le 1}\frac{x^*(t)}{\varphi(t)}=\sup_{0<t\le 1}x^*(t)\psi^{-1}(t).$$

Then, in view of hypothesis \eqref{submult}, from \cite[Theorem~3.8]{A-16} (see also \cite[Theorem~4.1]{ASZ-22}) it follows that any sequence $\{y_n\}_{n=1}^\infty$ of mean zero independent functions equimeasurable with the function $1/\psi^{-1}$ is equivalent in every r.i. space $X$ such that $1/\psi^{-1}\in X$ to the unit vector basis in $\ell_\psi$. In particular, this holds for $X=L_M$. Furthermore, applying Lemma \ref{lemma 3}, we find a function $N(u)$ equivalent to some Orlicz function such that ${N}\in \Delta_2^\infty$, $\tilde{N}\in \Delta_2^\infty$, $1/\psi^{-1}\in L_N$ and $\lim_{u\to\infty}N(u)/M(u)=\infty.$ Consequently, as above, the sequence $\{y_n\}_{n=1}^\infty$ is equivalent in $L_N$ to the unit vector basis in $\ell_\psi$ as well. Thus, for the subspace $H:=[y_n]_{L_M}$ we have $\sup_{x\in B_H}\|x\|_{L_N}<\infty$,  and hence, by the de la Vall\'{e}e Poussin description of sets having equi-absolutely continuous norms in r.i.\ spaces \cite[Theorem~3.2]{LMT} (see also \cite[Lemma~4]{A-14} or \cite[Theorem~5.1]{CFMN}), the unit ball $B_H$ of $H$ has equi-absolutely continuous norms in $L_M$. Equivalently, by Lemma \ref{lemma 2}, we have $\eta_{X}([y_n])=0$.

As a result, the space $X=L_M$ and the sequences $\{x_n\}_{n=1}^\infty$, $\{y_n\}_{n=1}^\infty$ satisfy conditions (a), (b) and (c') of Proposition \ref{Prop1a}. Applying it, we obtain the assertion of the theorem.
 
\end{proof}

\begin{cor}
\label{cor-main1}
Let $M$ be an Orlicz function such that $M\in \Delta_2^\infty$, $\tilde{M}\in \Delta_2^\infty$ and $t^{-1/\beta_M^\infty}\in L_M$. Then,  there is a strongly embedded subspace $H$ of $L_M$ such that the unit ball $B_H$ of $H$ fails to have equi-absolutely continuous norms in $L_M$.
\end{cor}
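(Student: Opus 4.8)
The plan is to deduce this corollary directly from Theorem \ref{theorem-main2} by exhibiting a single explicit function $\psi\in C_M^\infty$ that meets both hypotheses of that theorem. The exponent $1/\beta_M^\infty$ appearing in the assumption $t^{-1/\beta_M^\infty}\in L_M$ signals the right candidate: the pure power $\psi(t):=t^{\beta_M^\infty}$.

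First I would check that $\psi$ is a legitimate Orlicz function and lies in $C_M^\infty$. The assumption $M\in\Delta_2^\infty$ forces $\beta_M^\infty<\infty$ (the $\Delta_2^\infty$-condition bounds the upper Matuszewska--Orlicz index from above), while $\tilde M\in\Delta_2^\infty$ yields $\alpha_M^\infty>1$; thus $1<\alpha_M^\infty\le\beta_M^\infty<\infty$, and $\psi(t)=t^{\beta_M^\infty}$ is a bona fide convex Orlicz function with $\psi(1)=1$. By the characterization recalled in Section \ref{prel2}, namely that $t^p\in C_M^\infty$ precisely when $p\in[\alpha_M^\infty,\beta_M^\infty]$, the endpoint choice $p=\beta_M^\infty$ gives $\psi\in C_M^\infty$.

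Next I would verify the two structural requirements. The submultiplicativity \eqref{submult} is automatic, in fact holding with equality, since $\psi(uv)=(uv)^{\beta_M^\infty}=\psi(u)\psi(v)$, so one may take $C=1$. For the integrability requirement, I compute the inverse $\psi^{-1}(t)=t^{1/\beta_M^\infty}$, whence $1/\psi^{-1}(t)=t^{-1/\beta_M^\infty}$; the hypothesis $t^{-1/\beta_M^\infty}\in L_M$ is therefore exactly the statement $1/\psi^{-1}\in L_M$. With $M\in\Delta_2^\infty$, $\tilde M\in\Delta_2^\infty$, and $\psi\in C_M^\infty$ satisfying both \eqref{submult} and $1/\psi^{-1}\in L_M$, all hypotheses of Theorem \ref{theorem-main2} are in place, and applying it produces the desired strongly embedded subspace $H$ whose unit ball $B_H$ fails to have equi-absolutely continuous norms in $L_M$.

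There is no genuinely hard step here: the whole content is recognizing that the exponent $1/\beta_M^\infty$ in the hypothesis is exactly the one produced by the extremal power function $t^{\beta_M^\infty}$ sitting at the upper endpoint of the index interval, so the corollary is simply the specialization of Theorem \ref{theorem-main2} to this extremal $\psi$. The only point demanding a moment's care is confirming $\beta_M^\infty<\infty$ so that $\psi$ is well defined, and this is immediate from $M\in\Delta_2^\infty$.
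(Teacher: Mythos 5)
Your proof is correct and follows essentially the same route as the paper: the paper also applies Theorem \ref{theorem-main2} to $\psi(t)=t^{\beta_M^\infty}$, noting via \cite[Proposition~4]{LTIII} that $t^{\beta_M^\infty}\in C_M^\infty$, with the submultiplicativity condition \eqref{submult} holding trivially (with equality) for this power function. Your additional remarks that $M\in\Delta_2^\infty$ gives $\beta_M^\infty<\infty$ and $\tilde M\in\Delta_2^\infty$ gives $\alpha_M^\infty>1$ are accurate and merely make explicit what the paper leaves implicit.
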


\begin{proof}
By \cite[Proposition~4]{LTIII}, $t^{\beta_M^\infty}\in C_M^\infty$, and so there is a sequence $\{x_n\}_{n=1}^\infty$ of pairwise disjoint functions which spans $\ell^{\beta_M^\infty}$ in $L_M$. Therefore, applying Theorem \ref{theorem-main2} for $\psi(t)=t^{\beta_M^\infty}$, we come to the required result\footnote{Observe that in this special case the role of the Marcinkiewicz space $M(\varphi)$ will be played by the "weak"\:$L^p$-space $L_{p,\infty}$ endowed with the quasi-norm
$\|x\|_{L_{p,\infty}}:=\sup_{0<t\le 1}x^*(t)t^{1/p}.$ Also, instead of the above general results from \cite{A-16} or \cite{ASZ-22} we may make use of \cite[Theorem~2]{Brav} (see also \cite[Theorem~3, p.~51]{Brav2}).}.

\end{proof}





If we confine ourselves to considering only subspaces of an Orlicz space that are isomorphic to Orlicz sequence spaces, we come to the following simpler criterion.

\begin{theor}
\label{theorem-main5}
Let $M$ be an Orlicz function such that $1<\alpha_M^\infty\le \beta_M^\infty<2$. Then, the unit ball $B_H$ of an arbitrary strongly embedded subspace $H$ of the Orlicz space $L_M$, which is isomorphic to an Orlicz sequence space, has equi-absolutely continuous norms in $L_M$ if and only if $t^{-1/\beta_M^\infty}\not\in L_M$.
%
%
%
\end{theor}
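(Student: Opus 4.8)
**The plan is to prove both implications of Theorem~\ref{theorem-main5} by reducing the statement about Orlicz-sequence subspaces to the already-established dichotomy for the function $1/\psi^{-1}$.**

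For the sufficiency direction (``if $t^{-1/\beta_M^\infty}\not\in L_M$, then $B_H$ has equi-absolutely continuous norms''), I would argue by contraposition through Proposition~\ref{prop-main}. Suppose $H$ is strongly embedded and isomorphic to some Orlicz sequence space $\ell_\psi$, yet $B_H$ fails to have equi-absolutely continuous norms in $L_M$. By Proposition~\ref{prop-main}, there exists $\varphi\in C_M^\infty$ with $1/\varphi^{-1}\in L_M$. On the other hand, since $H\simeq\ell_\psi$ is strongly embedded, the second assertion of Proposition~\ref{prop1} would normally give $\alpha_\psi^0>\beta_M^\infty$ under the extra conditions \eqref{eq0} or \eqref{eq00}; but here I want a clean route that does not assume those conditions. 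Instead I would invoke the machinery directly: since $H\simeq\ell_\psi$ is strongly embedded and $1<\alpha_M^\infty\le\beta_M^\infty<2$ forces $1<\alpha_\psi^0\le\beta_\psi^0<2$ (via Lemma~\ref{lemma1} when $\psi\in C_M^\infty$, and via reflexivity in general), Lemma~\ref{Lemma 20} yields that $\psi$ is $(1+\varepsilon)$-convex and $(2-\varepsilon)$-concave near zero, and then \cite[Corollary~3.3]{A-16} gives $t^{-1/\alpha_\psi^0}\in L_M$. The key point is then to compare $\alpha_\psi^0$ with $\beta_M^\infty$: if $t^{-1/\beta_M^\infty}\not\in L_M$ while $t^{-1/\alpha_\psi^0}\in L_M$, monotonicity of the map $p\mapsto t^{-1/p}$ forces $\alpha_\psi^0>\beta_M^\infty$, whence $\psi\not\in C_M^\infty$ by Lemma~\ref{lemma1}, contradicting the fact that a strongly embedded $\ell_\psi$-subspace of $L_M$ must be spanned (up to isomorphism) by a disjoint sequence with generating function in $C_M^\infty$.

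For the necessity direction (``if $t^{-1/\beta_M^\infty}\in L_M$, then some strongly embedded $\ell_\psi$-subspace has a unit ball failing equi-absolute continuity''), I would apply Corollary~\ref{cor-main1} with the choice $\psi(t)=t^{\beta_M^\infty}$. Since $1<\alpha_M^\infty\le\beta_M^\infty<2$ guarantees $M\in\Delta_2^\infty$ and $\tilde M\in\Delta_2^\infty$, and since $t^{\beta_M^\infty}\in C_M^\infty$ by the index characterization in Section~\ref{prel2}, the hypotheses of Corollary~\ref{cor-main1} are met. That corollary produces a strongly embedded subspace $H$ of $L_M$ whose unit ball fails to have equi-absolutely continuous norms; moreover, because Corollary~\ref{cor-main1} is built on Proposition~\ref{Prop1a} via condition (c'), the resulting $H$ is isomorphic to $\ell_{\beta_M^\infty}=\ell^{\beta_M^\infty}$, i.e., to a genuine Orlicz sequence space. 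This exhibits the required counterexample within the restricted class of subspaces isomorphic to Orlicz sequence spaces.

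\textbf{The main obstacle} I anticipate is the sufficiency direction, specifically making rigorous the step that a strongly embedded subspace isomorphic to an \emph{abstract} Orlicz sequence space $\ell_\psi$ must have its generating function $\psi$ (or an equivalent one) lie in $C_M^\infty$. The cleanest subspaces isomorphic to $\ell_\psi$ inside $L_M$ are those spanned by disjoint functions, for which \cite[Proposition~4]{LTIII} directly gives $\psi\in C_M^\infty$; but an arbitrary isomorphic copy of $\ell_\psi$ need not be disjointly supported. I would bridge this gap by observing that the isomorphism only fixes $\ell_\psi$ up to equivalence of unit-vector bases, so it suffices to know that $L_M$ contains \emph{some} disjoint sequence equivalent to the $\ell_\psi$-basis; and by \cite[Theorem~4.a.8]{LT77} together with the index comparison, the relevant information is carried by $\alpha_\psi^0$ alone. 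Thus the argument routes entirely through the index $\alpha_\psi^0$ and the $L_M$-membership criterion of \cite[Corollary~3.3]{A-16}, sidestepping the need to realize $\ell_\psi$ disjointly and making the comparison $\alpha_\psi^0>\beta_M^\infty$ the genuine crux.
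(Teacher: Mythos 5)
Your necessity half is correct and coincides with the paper's: when $t^{-1/\beta_M^\infty}\in L_M$, one invokes Theorem~\ref{theorem-main2} in the form of Corollary~\ref{cor-main1} (i.e., with $\psi(t)=t^{\beta_M^\infty}$), and condition (c') of Proposition~\ref{Prop1a} guarantees that the resulting strongly embedded subspace is isomorphic to $\ell^{\beta_M^\infty}$. The sufficiency half, however, contains a fatal gap: your contradiction rests on the ``fact'' that a strongly embedded subspace of $L_M$ isomorphic to an Orlicz sequence space $\ell_\psi$ must have $\psi\in C_M^\infty$ (``must be spanned, up to isomorphism, by a disjoint sequence''). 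This is false. Membership $\psi\in C_M^\infty$ is what one gets from a sequence that is \emph{actually disjoint} in $L_M$ (\cite[Proposition~4]{LTIII}); a strongly embedded subspace can never be spanned by a normalized disjoint sequence (such a sequence tends to $0$ in measure but not in norm), and there is no general principle forcing the generating function of a strongly embedded $\ell_\psi$-copy into $C_M^\infty$. Concretely, the span of the Rademacher functions is strongly embedded in $L_M$ and isomorphic to $\ell^2$, yet $t^2\notin C_M^\infty$ precisely because $\beta_M^\infty<2$. The same example refutes your auxiliary claim that reflexivity forces $\beta_\psi^0<2$. Finally, your appeal to Proposition~\ref{prop-main} leads nowhere: it yields some $\varphi\in C_M^\infty$ with $1/\varphi^{-1}\in L_M$, but Lemma~\ref{lemma1} only gives $\beta_\varphi^0\le\beta_M^\infty$, hence $\varphi(s)\ge c_\varepsilon s^{\beta_M^\infty+\varepsilon}$ and $1/\varphi^{-1}(t)\ge c_\varepsilon\, t^{-1/(\beta_M^\infty+\varepsilon)}$ only for each $\varepsilon>0$; this never reaches the endpoint exponent (think of $M(t)=t^p$: $t^{-1/(p+\varepsilon)}\in L^p$ for every $\varepsilon>0$ while $t^{-1/p}\notin L^p$). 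So no contradiction with $t^{-1/\beta_M^\infty}\notin L_M$ is ever obtained.

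The paper closes exactly this gap by working inside the isomorphic structure of $H$ rather than quoting general facts. Repeating the construction from the proof of Proposition~\ref{prop-main}, it produces a weakly null unconditional basic sequence $\{f_i\}\subset H$ and pairwise disjoint sets $F_i$ with $\|f_i\chi_{F_i}\|_{L_M}\ge\alpha$; since $H\simeq\ell_\varphi$, after passing to subsequences $\{f_i\}$ is equivalent to the unit vector basis of some $\ell_\theta$ (coming from a block basis of the $\ell_\varphi$-basis), while the disjoint functions $h_i=f_i\chi_{F_i}$ span some $\ell_\psi$ with $\psi\in C_M^\infty$, and the unconditionality--Khintchine estimate gives the domination $\psi\le C\theta$ near $0$. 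The crux is then a two-case index comparison: if $\beta_M^\infty<\alpha_\theta^0$, then $\theta$ and $\psi$ would be equivalent near zero, contradicting $\beta_\psi^0\le\beta_M^\infty$ (Lemma~\ref{lemma1}); hence $\alpha_\theta^0\le\beta_M^\infty$, and applying \cite[Corollary~3.3]{A-16} to the strongly embedded copy of $\ell^{\alpha_\theta^0}\subset\ell_\theta\simeq[f_i]$ yields $t^{-1/\alpha_\theta^0}\in L_M$, whence $t^{-1/\beta_M^\infty}\in L_M$, the desired contradiction at the endpoint exponent. It is this interplay between the block-basis function $\theta$ and the disjoint-pieces function $\psi\in C_M^\infty$, not Proposition~\ref{prop-main} together with general facts about strongly embedded copies of $\ell_\psi$, that carries the sufficiency direction; your outline is missing it.
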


\begin{proof}
In view of Theorem \ref{theorem-main2}, it suffices to prove that the unit ball $B_H$ of an arbitrary strongly embedded subspace $H$ of the Orlicz space $L_M$, which is isomorphic to an Orlicz sequence space, has equi-absolutely continuous norms in $L_M$ whenever $t^{-1/\beta_M^\infty}\not\in L_M$.

On the contrary, assume that there exists a strongly embedded subspace $H$ of $L_M$, isomorphic to some Orlicz sequence space $\ell_\varphi$ and having non-equi-absolutely continuous norms. Arguing in the same way as in the proof of Proposition \ref{prop-main}, we construct a weakly null unconditional basic sequence $\{f_i\}_{i=1}^\infty\subset H$ and a sequence $\{F_i\}_{i=1}^\infty$ of some pairwise disjoint subsets of $[0,1]$ such that estimate \eqref{extra import} holds. Since $H$ is isomorphic to $l_\varphi$, we can assume (passing to a further subsequence if necessary) that $\{f_i\}$ is equivalent in $L_M$ to a block basis of the unit vector basis of $l_\varphi$. In turn, each such a block basis contains a subsequence, which is equivalent to the unit vector basis of some Orlicz sequence space $l_\theta$
(see e.g. \cite[p.~141]{LT77}). Thus, we can assume (possibly, again passing to a subsequence) that $[f_i]$ is itself isomorphic to the above basis. Observe also, for the future, that $[f_i]$ as a subspace of $H$ is strongly embedded in $L_M$.
  
Furthermore, as above, we can assume that the sequence $\{h_i\}$, where $h_i:=f_i\chi_{F_i}$, $i=1,2,\dots$, is equivalent in $L_M$ to the unit vector basis of an Orlicz sequence space $l_\psi$, where $\psi\in C_M^\infty$ (see, for instance, \cite[Proposition~3]{LTIII}). Now, repeating the arguments from the proof of Proposition \ref{prop-main}, we have
$$
\|(c_k)\|_{l_\theta}\asymp\Big\|\sum_{k=1}^\infty c_kf_k\Big\|_{L_M} \ge \frac{1}{\sqrt{2}}\Big\|\sum_{k=1}^\infty c_kh_k\Big\|_{L_M}\asymp \|(c_k)\|_{l_\psi},
$$ 
and hence for some $C>0$
\begin{equation}\label{one side}
\psi(t)\le C\theta(t),\;\;0<t\le 1.
\end{equation}

Let us assume now that $\beta_M^\infty< \alpha_\theta^0$. Then, by Lemma \ref{lemma1}, we get $\beta_\psi^0< \alpha_\theta^0$. Then, if $q\in (\beta_\psi^0,\alpha_\theta^0)$, from the definition of the Matuszewska-Orlicz indices at zero it follows that for some $C>0$
$$
C^{-1}\theta(t)\le t^q\le C\psi(t),\;\;0<t\le 1.$$
Combining this with inequality \eqref{one side}, we conclude that the functions $\theta$ and $\psi$ are equivalent on $(0,1]$. Thus, in particular, $\alpha_\psi^0=\alpha_\theta^0$ and $\beta_\psi^0=\beta_\theta^0$. Since $\beta_\psi^0\le \beta_M^\infty$ by Lemma \ref{lemma1}, this clearly contradicts the initial assumption.

Thus, $\beta_M^\infty\ge \alpha_\theta^0$. Observe that $\ell^{\alpha_\theta^0}$ is a subspace of $\ell_\theta$ \cite[Theorem~4.a.9]{LT77}, 
and hence is isomorphic to some subspace of $H$. 
Therefore, by \cite[Corollary~3.3]{A-16}, $t^{-1/\alpha_\theta^0}\in L_M$.
Since $\beta_M^\infty\ge \alpha_\theta^0$, this implies that $t^{-1/\beta_M^\infty}\in L_M$ as well. Since this contradicts the condition, the proof is completed.

\end{proof}

If an Orlicz function $M$ is regularly varying at $\infty$ of order $p$, we clearly have that $C_M^\infty=\{t^p\}$ and $\beta_M^\infty=p$. Therefore, we get

\begin{cor}
\label{reg var}
Suppose that $M$ is an Orlicz function, which regularly varies of order $p\in (1,2)$ at $\infty$. Then, the unit ball $B_H$ of an arbitrary strongly embedded subspace $H$ of the Orlicz space $L_M$ has equi-absolutely continuous norms in $L_M$ if and only if $t^{-1/p}\not\in L_M$.
\end{cor}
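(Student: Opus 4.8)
The plan is to reduce Corollary \ref{reg var} to Theorem \ref{theorem-main5}, which has already been proved. The key observation is that when $M$ regularly varies of order $p\in(1,2)$ at $\infty$, the set $C_M^\infty$ degenerates to the single function $\{t^p\}$, and consequently $\alpha_M^\infty=\beta_M^\infty=p$, so in particular $1<\alpha_M^\infty\le\beta_M^\infty<2$ and the hypotheses of Theorem \ref{theorem-main5} are met. Thus everything hinges on identifying $C_M^\infty=\{t^p\}$ and $\beta_M^\infty=p$.

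First I would verify that regular variation of order $p$ forces $E_M^\infty=\{t^p\}$. By definition of regular variation, $\lim_{v\to\infty}M(uv)/M(v)=u^p$ for every $u>0$. Given any $L\in E_M^\infty$, we have $L(s)=\lim_{n\to\infty}M(st_n)/M(t_n)$ for some $t_n\uparrow\infty$; since the full limit as $v\to\infty$ exists and equals $s^p$, every such subsequential limit must coincide with $s^p$. Hence $E_M^\infty=\{t^p\}$. Taking the convex hull of a singleton and closing in $C[0,1]$ changes nothing, so $C_M^\infty=\overline{\mathrm{conv}}\,E_M^\infty=\{t^p\}$ as well. From $t^p\in C_M^\infty$ together with the characterization $t^r\in C_M^\infty\iff r\in[\alpha_M^\infty,\beta_M^\infty]$ (recorded in Section \ref{prel2}), and the fact that $C_M^\infty$ contains no other power, I would conclude $\alpha_M^\infty=\beta_M^\infty=p$.

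With $\beta_M^\infty=p$ established, the conclusion is immediate: Theorem \ref{theorem-main5} applies and states that the unit ball $B_H$ of every strongly embedded subspace $H$ of $L_M$ isomorphic to an Orlicz sequence space has equi-absolutely continuous norms in $L_M$ if and only if $t^{-1/\beta_M^\infty}=t^{-1/p}\not\in L_M$. Moreover, in the regularly varying case there is no loss in restricting to subspaces isomorphic to Orlicz sequence spaces: since $C_M^\infty=\{t^p\}$, the only Orlicz sequence space $\ell_\psi$ arising from a pairwise disjoint sequence (via $\psi\in C_M^\infty$) is $\ell^p$ itself, so Theorem \ref{theorem-main} also directly yields the equivalence of (i)--(iii) for the unrestricted class of strongly embedded subspaces. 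Either way, the stated biconditional with $t^{-1/p}\not\in L_M$ follows.

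The only mildly delicate point, and the step I would state most carefully, is confirming that subsequential limits defining elements of $E_M^\infty$ are genuinely controlled by the full Karamata limit, so that $E_M^\infty$ cannot contain any function other than $t^p$; this is precisely where regular variation (the existence of the limit, not merely along a subsequence) does the work and is what collapses $C_M^\infty$ to a single point. I do not expect a serious obstacle here, since it is a routine consequence of the definitions, and indeed the corollary is essentially a direct specialization of Theorem \ref{theorem-main5} once $C_M^\infty$ and $\beta_M^\infty$ are computed.
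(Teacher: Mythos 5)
Your proposal is correct and follows essentially the same route as the paper: the paper's own proof consists precisely of the observation that regular variation of order $p$ collapses $C_M^\infty$ to $\{t^p\}$ and forces $\beta_M^\infty=p$, after which the corollary is read off from the general results (Theorem \ref{theorem-main} gives the ``if'' direction for arbitrary strongly embedded subspaces, since its hypothesis reduces to $t^{-1/p}\not\in L_M$, while Theorem \ref{theorem-main2}, equivalently the ``only if'' half of Theorem \ref{theorem-main5}, supplies the bad subspace when $t^{-1/p}\in L_M$). Your extra care in deducing $E_M^\infty=\{t^p\}$ from the existence of the full Karamata limit, and in bridging the gap between Theorem \ref{theorem-main5} (which covers only subspaces isomorphic to Orlicz sequence spaces) and the corollary's unrestricted class of subspaces by invoking Theorem \ref{theorem-main}, simply makes explicit what the paper's one-line proof leaves implicit.
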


\begin{center}
S.V. Astashkin \\
Department of Mathematics and Mechanics\\
Samara State University\\
443011 Samara, Acad. Pavlov, 1 \\
Russian Federation\\
e-mail: {\it astash@samsu.ru}
\end{center}

\vskip 0.4cm


\begin{thebibliography}{99}

\bibitem {Z1} A.~Zygmund, {\it Trigonometric series. Vol. II,} Cambridge Univ. Press, Cambridge, UK, 1959.

\bibitem{AK} F.~Albiac and N.~J.~Kalton, {\it Topics in Banach Space Theory}, Graduate Texts in Mathematics, vol.~233, Springer-Verlag, New York, 2006.

\bibitem{Rud}
W.~Rudin, {\it Trigonometric series with gaps}. J. Math. Mech. {\bf 9} (1960), ~203--227.

\bibitem{bour} J. Bourgain, {\it Bounded orthogonal systems and the $\Lambda(p)$-set problem,}
Acta Math. {\bf 162} (1989), 227--245.

\bibitem{BachEb} G. F. Bachelis and S. E. Ebenstein, {\it On $\Lambda(p)$ sets,} Pacific J. Math. {\bf 54}:1 (1974), 35--38.

\bibitem{Ros} H. P. Rosenthal, {\it On subspaces of $L^p$,} Annals of Math., {\bf 97}(1973), 344--373.

\bibitem{A-16} S.~V.~Astashkin, \textsl{On symmetric spaces containing isomorphic copies of Orlicz sequence spaces}, Comment. Math. \textbf{56} (2016), no.~1, 29--44.

\bibitem{KPS} S. G. Krein, Yu. I. Petunin, and E. M. Semenov, {\it Interpolation of Linear Operators}, Nauka, Moscow, 1978 (in Russian); English transl. in Amer. Math. Soc., Providence 1982.

\bibitem{LT} J.~Lindenstrauss and L.~Tzafriri, {\it Classical Banach
Spaces, II. Function Spaces}, Berlin; Heidelberg; New York:
Springer-Verlag, 1979.

\bibitem {BSh} C.~Bennett, R.~Sharpley, {\it Interpolation of operators,} London: Academic Press, 1988.

\bibitem {KR} M.~A.~Krasnosel'skii and Ya.~B.~Rutickii, {\it Convex Functions and Orlicz Spaces,} Fizmatgiz, M., 1958 (in Russian); English transl. Noordhoff, Gr\"{o}ningen, 1961.

\bibitem{RR} M.~M.~Rao and Z.~D.~Ren, {\it Theory of Orlicz spaces}, Monographs and Textbooks in Pure and Applied Mathematics, vol. 146 - Marcel Dekker Inc., New York, 1991.

\bibitem{Mal} 
L.~Maligranda, {\it Orlicz spases and interpolation}, Seminars in Mathematics 5 Campinas SP, Brazil, University of Campinas, 1989.

\bibitem{LT77} J. Lindenstrauss and L. Tzafriri, {\it Classical Banach
Spaces, I. Sequence Spaces}, Springer-Verlag, Berlin-New York 1977.

\bibitem{LTIII} {\it J. Lindenstrauss and L. Tzafriri}, On Orlicz sequence spaces. III, Israel J. Math. {\bf 14} (1973), 368--389.

\bibitem{KamRy} A.~Kaminska and Y.~Raynaud, {\it Isomorphic copies in the lattice E and its symmetrization $E^{*}$ with applications to Orlicz–Lorentz spaces,} J. Funct. Anal. {\bf 257} (2009), no. 1, 271--331.

\bibitem{Tok} E. V.~Tokarev, {\it On subspaces of some symmetric spaces}, in Funct. Anal. and Their Appl. {\bf 24} (1975) , Kharkiv, 156--161 (in Russian).

\bibitem {KadPel} M. I. Kadec and A. Pe{\l}czy{\'n}ski, {\it Bases, lacunary sequences and complemented subspaces in the spaces $L_{p}$}, Studia Math. {\bf 21} (1962), 161--176.

\bibitem {NST} S. Ja. Novikov, E.~M.~Semenov  and E. V.~Tokarev, {\it The structure of subspaces of the space $\Lambda_p(\varphi)$}, Dokl. Akad. Nauk {\bf 247}:3 (1979), 252--254 (in Russian); 
English transl. in Soviet Math. Dokl. {\bf 20}:4 (1979), 760--761.

\bibitem {NST2} S. Ja. Novikov, E.~M.~Semenov  and E. V.~Tokarev, {\it On structure of subspaces of the spaces $\Lambda_p(\varphi)$}, Theory of Functions, Funct. Anal. and Their Appl. {\bf 42} (1984), Kharkiv, 91--97 (in Russian).

\bibitem{FJT} T.~Figiel, W.~B.~Johnson and L.~Tzafriri, {\it On Banach lattices and spaces having local unconditional structure with applications to Lorentz function spaces}, J. Approx. Theory, {\bf 13} (1975), 395--412.

\bibitem{AS-20} S.~V.~Astashkin, E. M. Semenov, {\it One property of rearrangement invariant spaces whose second K\"{o}the dual is nonseparable,}  Math. Notes {\bf 107}:1(2020), 10--19.

\bibitem{JS} W. Johnson and G. Schechtman, {\it Sums of independent random variables in rearrangement invariant function spaces}, Ann. Probab. {\bf 17} (1989), ~789--808.

\bibitem{MSS}
S. Montgomery--Smith and E.~M. Semenov, {\it Random rearrangements
and operators}, Amer. Math. Soc. Transl. {\bf 184}:2 (1998),
~157--183.

\bibitem{AS-14} S. V. Astashkin and F.A. Sukochev) {\it Orlicz sequence spaces spanned by identically distributed independent random variables in $L_p$--spaces,} J. Math. Anal. Appl. \textbf{413} (2014), 1--19.

\bibitem{Al-94} J. Alexopoulos, {\it De La Vall\'{e}e Poussin's  theorem and weakly compact sets in Orlicz spaces,} Quaestiones Mathematicae, \textbf{17}:2(1994), 231--248.

\bibitem{CFMN} R. del Campo, A. Fern\'{a}ndez, F. Mayoral and F. Naranjo, {\it Compactness in quasi-Banach function spaces with applications to $L^1$ of the semivariation of a vector measure,} Rev. R. Acad. Cienc. Exactas Fis. Nat. Ser. A Mat. RACSAM {\bf 114}(2020), no. 3, Paper No. 112, 17 pp.

\bibitem{LMT} K. Le\'{s}nik, L. Maligranda, and J. Tomaszewski, {\it Weakly compact sets and weakly compact pointwise multipliers in Banach function lattices,} Math. Nachr. {\bf 295} (2022), 574--592.

\bibitem{szarek} S.~J.~Szarek, {\it On the best constant in the Khintchine inequality,} Studia Math. {\bf 58} (1976), 197--208.

\bibitem{DSS} P.G. Dodds, E.M. Semenov and F.A. Sukochev, {\it The
Banach-Saks property in rearrangement invariant spaces,} Studia
Math. {\bf 162}(3) (2004), 263--294.

\bibitem{ASS} Astashkin S., Semenov E., Sukochev F. {\it The Banach-Saks $p$-property}, Math. Ann. \textbf{332} (2005), 879--900.

\bibitem{SS} Semenov E., Sukochev F. {\it Banach-Saks index,} Sbornik: Math. \textbf{195}(2004), 263--285.

\bibitem{AKS} Astashkin S., Kalton N. and Sukochev F.{\it Cesaro mean convergence of martingale differences in rearrangement invariant
spaces}, Positivity, \textbf{12} (2008), 387--406.

\bibitem{ASW1}Astashkin S., Sukochev F., Wong C. {\it Disjointification of martingale differences and conditionally independent random variables with some applications}, Studia Math.,{\textbf 205}:2 (2011), 171--200.

\bibitem{Mont-94} S.~J.~Montgomery-Smith, \textit{The Hardy operator and Boyd indices.} In: "Interaction between functional analysis, harmonic analysis, and probability" (Columbia, MO, 1994), Lecture Notes in Pure and Appl. Math. \textbf{175} (Dekker, New York 1996),   359--364.

\bibitem{KMP-07} A.~Kufner, L.~Maligranda, and L.-E.~Persson, {\it The Hardy Inequality. About its History and Some Related Results}, Plze\u{n} 2007.

\bibitem  {Nov-sb} S. Ja. Novikov, {\it On some characteristic  of subspaces of a symmetric space}, in Studies in the Theory of Functions of Several Real Variables (Yaroslav. Gos. Univ., Yaroslavl', 1980),  pp. 140--148 (in Russian).

\bibitem{ASZ-22} S.~Astashkin, F. Sukochev and D. Zanin, {\it The distribution of a random variable whose independent copies span $\ell_M$ is unique}, Rev. Mat. Complut. https://doi.org/10.1007/s13163-021-00406-x

\bibitem{A-14} S.~V.~Astashkin, {\it $\Lambda(p)$--spaces,} J. Funct. Anal., {\bf 266}(2014), 5174--5198.

\bibitem {Brav} M.~Sh.~Braverman, {\it Symmetric spaces and sequences of independent random variables}, Funct. Anal. Appl., {\bf 19}:4 (1985), 78--79.

\bibitem {Brav2} M.~Sh.~Braverman, {\it Independent random variables and rearrangement invariant spaces}, Cambridge University Press, 1994.



















































  
















%
%
%
%
%
%
%
%
%
%
%
%

\end{thebibliography}
\end{document}